\documentclass[12pt]{amsart}
\usepackage{a4wide,enumerate,color}
\allowdisplaybreaks

\let\pa\partial  
\let\na\nabla  
\let\eps\varepsilon  
\newcommand{\diver}{\operatorname{div}}
\newcommand{\N}{\mathbb{N}}
\newcommand{\R}{\mathbb{R}}
\newcommand{\E}{\mathbb{E}}
\newcommand{\F}{{\mathcal F}}

\newtheorem{theorem}{Theorem}   
\newtheorem{lemma}[theorem]{Lemma}   
\newtheorem{proposition}[theorem]{Proposition}   
\newtheorem{remark}[theorem]{Remark}

 
\begin{document}  

\title[Rigorous mean-field limit and cross diffusion]{Rigorous mean-field limit 
and cross diffusion}

\author[L. Chen]{Li Chen}
\address{University of Mannheim, Department of Mathematics, 68131 Mannheim, Germany}
\email{chen@math.uni-mannheim.de}

\author[E. S. Daus]{Esther S. Daus}
\address{Institute for Analysis and Scientific Computing, Vienna University of  
	Technology, Wiedner Hauptstra\ss e 8--10, 1040 Wien, Austria}
\email{esther.daus@tuwien.ac.at} 

\author[A. J\"ungel]{Ansgar J\"ungel}
\address{Institute for Analysis and Scientific Computing, Vienna University of  
	Technology, Wiedner Hauptstra\ss e 8--10, 1040 Wien, Austria}
\email{juengel@tuwien.ac.at} 

\date{\today}

\thanks{The first author acknowledges support from the DFG project CH955/3-1. 
The last two authors acknowledge partial support from   
the Austrian Science Fund (FWF), grants F65, P27352, P30000, and W1245.
The authors thank Dr.\ Nicola Zamponi for his help for the proof of Lemma \ref{lem.sqrt}
in the appendix} 

\begin{abstract}
The mean-field limit in a weakly interacting stochastic many-particle system 
for multiple population species in the whole space is proved. 
The limiting system consists of cross-diffusion equations,
modeling the segregation of populations. 
The mean-field limit is performed in two steps: First, the many-particle
system leads in the large population limit to an intermediate nonlocal 
diffusion system. The local cross-diffusion system is then obtained from
the nonlocal system when the interaction potentials approach the Dirac delta
distribution. The global existence of the limiting and the intermediate 
diffusion systems is shown for small initial data, and an error estimate is given.
\end{abstract}

\keywords{Interacting particle system, stochastic processes, cross-diffusion system, 
mean-field equations, mean-field limit, population dynamics.}  

\subjclass[2000]{35Q92, 35K45, 60J70, 60H30, 82C22}  

\maketitle


\section{Introduction}\label{sec.intro}

Cross-diffusion models are systems of quasilinear 
parabolic equations with a nondiagonal diffusion matrix. 
They arise in many applications in cell biology, 
multicomponent gas dynamics, population dynamics, etc.\ \cite{Jue16}.
To understand the range of validity of these diffusion systems, it is important to 
derive them from first principles or from more general models. In the literature,
cross-diffusion systems were derived from random walks on lattice models
\cite{ZaJu17}, the kinetic Boltzmann equation \cite{BGS15},
reaction-diffusion systems \cite{CoDe14,IzMi08}, 
or from stochastic many-particle systems \cite{Ste00}.
We derive in this paper rigorously the $n$-species cross-diffusion system
\begin{equation}\label{1.cds}
\begin{aligned}
  \pa_t u_i - \sigma_i\Delta u_i &= \diver\bigg(\sum_{j=1}^n a_{ij}u_i\na u_j\bigg)
	\quad\mbox{in }\R^d,\ t>0, \\
  u_i(0) &= u_i^0, \quad i=1,\ldots,n,
\end{aligned}
\end{equation}
where $\sigma_i>0$ and $a_{ij}$ are real numbers, starting from a stochastic
many-particle system for multiple species. System \eqref{1.cds} describes the 
diffusive dynamics of populations subject to segregation effects modeled by
the term on the right-hand side \cite{GaSe14}.

\subsection{Setting of the problem}

We consider $n$ subpopulations of interacting individuals moving in the 
whole space $\R^d$ with the particle numbers $N_i\in\N$, $i=1,\ldots,n$. 
We take $N_i=N$ to simplify the notation. The individuals are represented by
the stochastic processes $X_{\eta,i}^{k,N}(t)$ evolving according to
\begin{equation}\label{1.Xi}
\begin{aligned}
  dX_{\eta,i}^{k,N}(t) &= -\sum_{j=1}^n\frac{1}{N}\sum_{\ell=1}^N
	\na V_{ij}^\eta\big(X_{\eta,i}^{k,N}(t)-X_{\eta,j}^{\ell,N}(t)\big)dt
	+ \sqrt{2\sigma_i}dW_i^k(t), \\
  X_{\eta,i}^{k,N}(0) &= \xi_i^k, \quad i=1,\ldots,n,\ k=1,\ldots,N,
\end{aligned}
\end{equation}
where $(W_i^k(t))_{t\ge 0}$ are $d$-dimensional Brownian motions,
the initial data $\xi_i^1,\ldots,\xi_i^N$ are independent and 
identically distributed random variables with the common probability density function
$u_i^0$, and the interaction potential $V_{ij}^\eta$ is given by
$$
  V_{ij}^{\eta}(x) = \frac{1}{\eta^d}V_{ij}\bigg(\frac{|x|}{\eta}\bigg), 
	\quad x\in\R^d,\ i,j=1,\ldots,n.
$$
Here, $V_{ij}$ is a given smooth function and $\eta>0$ the scaling parameter.
The scaling is chosen in such a way that the $L^1$ norm of $V_{ij}^\eta$ stays
invariant and $V_{ij}^\eta\to a_{ij}\delta$ in the sense of distributions
as $\eta\to 0$, where $\delta$ denotes the Dirac delta distribution.

The mean-field limit $N\to\infty$, $\eta\to 0$ 
has to be understood in the following sense.
For fixed $\eta>0$, the many-particle model \eqref{1.Xi} is approximated 
for $N\to\infty$ by the intermediate stochastic system
\begin{equation}\label{1.barXi}
\begin{aligned}
  d\bar X_{\eta,i}^k(t) &= -\sum_{j=1}^n(\na V_{ij}^\eta* u_{\eta,j})
	(\bar X_{\eta,i}^k(t),t)dt + \sqrt{2\sigma_i} dW_i^k(t), \\
	\bar X_{\eta,i}^k(0) &= \xi_i^k, \quad i=1,\ldots,n,\ k=1,\ldots,N,
\end{aligned}
\end{equation}
where $u_{j,\eta}$ is the probability density function of $\bar X_{j,\eta}^k$,
satisfying the nonlocal diffusion system
\begin{equation}\label{1.uieta}
\begin{aligned}
  \pa_t u_{\eta,i} &= \sigma_i\Delta u_{\eta,i} + \diver\bigg(\sum_{j=1}^n u_{\eta,i}
	\na V_{ij}^\eta * u_{\eta,j}\bigg)\quad\mbox{in }\R^d,\ t>0, \\
	u_{\eta,i}(0) &= u_{i}^0, \quad i=1,\ldots,n.
\end{aligned}
\end{equation}
Observe that the intermediate system depends on $k$ only through the initial data.
Then, passing to the limit $\eta\to 0$ in the intermediate system, 
the limit $\na V_{ij}^\eta * u_{\eta,j} \to a_{ij}\na u_j$ in $L^2$ leads to the
limiting stochastic system
\begin{equation}\label{1.hatXi}
\begin{aligned}
  d\widehat X_i^k(t) &= -\sum_{j=1}^n a_{ij}\na u_j(\widehat X_i^k(t),t)dt
	+ \sqrt{2\sigma_i}dW_i^k(t), \\
	\widehat X_i^k(0) &= \xi_i^k, \quad i=1,\ldots,n,\ k=1,\ldots,N,
\end{aligned}
\end{equation}
and the law of $\widehat X_i^k$ is a solution to the limiting cross-diffusion 
system \eqref{1.cds}. The main result of this paper
is the proof of the error estimate
$$
  \E\bigg(\sum_{i=1}^n\sup_{0<s<t}\sup_{k=1,\ldots,N}\big|X_{\eta,i}^{k,N}(s)
	- \widehat X_i^k(s)\big|\bigg) \le C(t)\eta,
$$
if we choose $\eta$ and $N$ such that $\eta^{-(2d+4)}\le \eps\log N$ holds
and $\eps>0$ can be any small number.

\subsection{State of the art}

Mean-field limits were investigated intensively in the last decades to derive,
for instance, reaction-diffusion equations \cite{DFL86} or McKean-Vlasov equations 
\cite{DaDe96,Gae88} (also see the reviews \cite{Gol03,JaWa17}). 
Oelschl\"ager \cite{Oel84} considered in the 1980s a weakly interacting 
particle system of $N$ particles and proved that in the limit $N\to\infty$, 
the stochastic system converges to a deterministic nonlinear process.
Later, he generalized his approach to systems of reaction-diffusion equations
\cite{Oel89}.

The analysis of quasilinear diffusion systems started more recently. 
The chemotaxis system was derived by Stevens \cite{Ste00} from a stochastic 
many-particle system with a limiting procedure that is based on Oelschl\"ager's
work. Reaction-diffusion systems 
with nonlocal terms were derived in \cite{KaSu13} as the mean-field limit of
a master equation for a vanishing reaction radius; also see \cite{IRS12}.
The two-species Maxwell-Stefan equations were found to be the hydrodynamic
limit system of two-component Brownian motions with singular interactions \cite{Seo18}.
Nonlocal Lotka-Volterra systems with cross diffusion were obtained in the
large population limit of point measure-valued Markov processes by Fontbona and
M\'el\'eard \cite{FoMe15}. Moussa \cite{Mou17} then proved the limit from the 
nonlocal to the local diffusion system (but only for
triangular diffusion matrices), which gives the Shigesada-Kawasaki-Teramoto
cross-diffusion system. A derivation of a space discretized
version of this system from a Markov chain model was presented in \cite{DDD18}.
Another nonlocal mean-field model was analyzed in \cite{BPRSW18}.

Our system \eqref{1.cds} is different from the aforementioned
Shigesada-Kawasaki-Teramoto system
$$
  \pa_t u_i = \Delta\bigg(u_i\sum_{j=1}^n a_{ij}u_j\bigg)
	= \diver\bigg(\sum_{j=1}^n a_{ij}u_i\na u_j\bigg)
	+ \diver\bigg(\sum_{j=1}^n a_{ij}u_j\na u_i\bigg)
$$
derived in \cite{FoMe15,Mou17}.
Our derivation produces the first term on the right-hand side.
The reason for the difference is that in \cite{FoMe15}, 
the diffusion coefficient $\sigma_i$ in 
\eqref{1.Xi} is assumed to depend on the convolutions $W_{ij} * u_j$ for some
functions $W_{ij}$ -- yielding the last term in the previous equation --, 
while we have assumed a constant
diffusion coefficient. It is still an open problem to derive the general
Shigesada-Kawasaki-Teramoto system; the approach of Moussa \cite{Mou17} requires
that $a_{ij}=0$ for $j<i$.

System \eqref{1.cds} was also investigated in the literature.
A formal derivation from the intermediate diffusion system \eqref{1.uieta}
was performed by Galiano and Selgas \cite{GaSe14}, while probabilistic representations 
of \eqref{1.cds} were presented in \cite{Bel16}. A rigorous derivation from
the stochastic many-particle system \eqref{1.Xi} is still missing in the literature.
In this paper, we fill this gap by extending the technique of
\cite{CGK18} to diffusion systems. Compared to \cite{CGK18}, the argument to
derive the uniform estimates is more involved and involves a nonlinear
Gronwall argument (see Lemma \ref{lem.sqrt} in the appendix).

The global existence of solutions to \eqref{1.cds} for general initial data 
and coefficients $a_{ij}\ge 0$ is an open problem. 
The reason is that we do not know any entropy structure of \eqref{1.cds}.
For the two-species system, 
Galiano and Selgas \cite{GaSe14} proved the global existence of weak solutions 
in a bounded domain with no-flux boundary conditions 
under the condition $4a_{11}a_{22}>(a_{12}+a_{21})^2$. 
The idea of the proof is to show that $H(u)=\int u_i(\log u_i-1) dx$ is a 
Lyapunov functional (entropy). The condition can be weakened to
$a_{11}a_{22}>a_{12}a_{21}$ using the modified entropy 
$H_1(u)=\int(a_{21}u_1(\log u_1-1)+a_{12}u_2(\log u_2-1))dx$, but this
is still a weak cross-diffusion condition.

We use the following notation throughout the paper. 
We write $\|\cdot\|_{L^p}$ and $\|\cdot\|_{H^s}$ for the norms
of $L^p=L^p(\R^d)$ and $H^s=H^s(\R^d)$, respectively. Furthermore,
$|u|^2 = \sum_{i=1}^n u_i^2$ for $u=(u_1,\ldots,u_n)\in\R^n$ and
$\|u\|_{L^p}^2=\sum_{i=1}^n\|u_i\|_{L^p}^2$ for functions $u=(u_1,\ldots,u_n)$.
We use the notation $u(t)=u(\cdot,t)$ for functions depending on $x$ and $t$,
and $C>0$ is a generic constant whose value may change from line to line.

\subsection{Main results}

The first two results are concerned with the solvability of the
nonlocal diffusion system \eqref{1.uieta} and the limiting cross-diffusion
system \eqref{1.cds}. The existence results are needed for our main result,
Theorem \ref{thm.main} below.

We impose the following assumptions on the interaction potential.
Let $V_{ij}\in C_0^2(\R^d)$ be such that $\operatorname{supp}(V_{ij})
\subset B_1(0)$ for $i,j=1,\ldots,n$. Then $V_{ij}^\eta(x)=\eta^{-d}V_{ij}(|x|/\eta)$
for $\eta>0$ satisfies  $\operatorname{supp}(V_{ij}^\eta)\subset B_\eta(0)$ and
$$
  a_{ij} := \int_{\R^d} V_{ij}(|x|)dx = \int_{\R^d}V_{ij}^\eta(x)dx,
	\quad i,j=1,\ldots,n.
$$
As the potential may be negative (and $a_{ij}$ may be negative too), we introduce
$$
  A_{ij} := \|V_{ij}\|_{L^1} = \|V_{ij}^\eta\|_{L^1}, \quad i,j=1,\ldots,n.
$$

\begin{proposition}[Existence for the nonlocal diffusion system]\label{prop.ex}
\ Let $u^0=(u_1^0,\ldots,u_n^0)\in$ $H^s(\R^d;\R^n)$ 
with $s>d/2+1$ and $u_i^0\ge 0$ in $\R^d$ and assume that
\begin{equation}\label{1.small}
  \|u^0\|_{H^s} \le \frac{\sigma}{C^*\sum_{i,j=1}^n A_{ij}},
\end{equation}
where $\sigma=\min_{i=1,\ldots,n}\sigma_i>0$ and 
$C^*>0$ is a constant only depending on $s$ and $d$. 
Then there exists a global
solution $u_\eta=(u_{\eta,1},\ldots,u_{\eta,n})$ to problem \eqref{1.uieta} such that
$u_{\eta,i}(t)\ge 0$ in $\R^d$, $t>0$,
$u_\eta\in L^\infty(0,\infty;H^s(\R^d;\R^n)\cap L^2(0,\infty;H^{s+1}(\R^d;\R^n))$, and
$$
	\sup_{t>0}\|u_\eta(t)\|_{H^s} \le \|u^0\|_{H^s}.
$$
Moreover, if for some $0<\gamma<\sigma$ the slightly stronger condition
\begin{equation}\label{1.small2}
  \|u^0\|_{H^s} \le \frac{\sigma-\gamma}{C^*\sum_{i,j=1}^n A_{ij}}
\end{equation}
holds, then the solution is unique and
\begin{equation}\label{1.est}
  \sup_{t>0}\|u_\eta(t)\|_{H^s}^2 + \gamma\|\na u_\eta\|_{L^2(0,\infty;H^{s+1})}^2 
	\le \|u^0\|_{H^s}^2.
\end{equation}
\end{proposition}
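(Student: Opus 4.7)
The plan is to prove Proposition \ref{prop.ex} in three steps: (i) construction via iteration, (ii) a priori $H^s$ bounds closed by the smallness condition, and (iii) uniqueness. For construction, I would treat the drift $b_i[u]:=\sum_{j=1}^n\na V_{ij}^\eta*u_j$ as a datum and iterate: given $u_\eta^{(m)}\ge 0$ in $H^s$, solve the linear transport-diffusion system
\begin{equation*}
  \pa_t u_{\eta,i}^{(m+1)} - \sigma_i\Delta u_{\eta,i}^{(m+1)} = \diver\bigl(u_{\eta,i}^{(m+1)}\,b_i[u_\eta^{(m)}]\bigr),\quad u_{\eta,i}^{(m+1)}(0)=u_i^0.
\end{equation*}
Since $\na V_{ij}^\eta$ is smooth and bounded (for fixed $\eta$), standard linear parabolic theory yields a unique $L^\infty H^s\cap L^2H^{s+1}$ solution, and the maximum principle preserves $u_{\eta,i}^{(m+1)}\ge 0$.

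The heart of the proof is the a priori estimate. Applying $D^\alpha$ for $|\alpha|\le s$ to the nonlinear equation, multiplying by $D^\alpha u_{\eta,i}$, integrating by parts, and summing over $\alpha$ and $i$ gives
\begin{equation*}
  \tfrac{1}{2}\tfrac{d}{dt}\|u_\eta\|_{H^s}^2 + \sum_{i=1}^n\sigma_i\|\na u_{\eta,i}\|_{H^s}^2 \le \sum_{i,j=1}^n\|\na u_{\eta,i}\|_{H^s}\,\bigl\|u_{\eta,i}(\na V_{ij}^\eta*u_{\eta,j})\bigr\|_{H^s}.
\end{equation*}
Since $s>d/2$ makes $H^s(\R^d)$ a Banach algebra, and since the identity $\na V_{ij}^\eta*u_{\eta,j}=V_{ij}^\eta*\na u_{\eta,j}$ combined with Young's convolution inequality gives $\|\na V_{ij}^\eta*u_{\eta,j}\|_{H^s}\le A_{ij}\|\na u_{\eta,j}\|_{H^s}$, a Cauchy-Schwarz step in $(i,j)$ bounds the right-hand side by $C^*\bigl(\sum_{i,j}A_{ij}\bigr)\|u_\eta\|_{H^s}\|\na u_\eta\|_{H^s}^2$. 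Hence
\begin{equation*}
  \tfrac{1}{2}\tfrac{d}{dt}\|u_\eta\|_{H^s}^2 + \Bigl(\sigma - C^*\sum_{i,j=1}^nA_{ij}\|u_\eta\|_{H^s}\Bigr)\|\na u_\eta\|_{H^s}^2 \le 0.
\end{equation*}
A continuity/bootstrap argument using \eqref{1.small} keeps the dissipative coefficient nonnegative for all $t>0$, giving $\|u_\eta(t)\|_{H^s}\le\|u^0\|_{H^s}$; under the strictly stronger condition \eqref{1.small2} the coefficient is at least $\gamma$, yielding \eqref{1.est}. These bounds are uniform in the iteration index $m$, so Aubin--Lions compactness (together with the convergence of the convolutions) delivers a nonnegative limit $u_\eta$ solving \eqref{1.uieta}.

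For uniqueness under \eqref{1.small2}, set $w=u_\eta-\tilde u_\eta$ for two solutions and run the same $H^s$ energy estimate on the difference. The drift contributes two kinds of terms on the right-hand side: one proportional to $\bigl(\sum_{i,j}A_{ij}\bigr)\|\tilde u_\eta\|_{H^s}\|\na w\|_{H^s}^2$, which is absorbed into $\sigma\|\na w\|_{H^s}^2$ thanks to the strict smallness, and one of the form $\bigl(\sum_{i,j}A_{ij}\bigr)\|w\|_{H^s}\|\na u_\eta\|_{H^s}\|\na w\|_{H^s}$, which after Young's inequality leaves a remainder $C\|\na u_\eta(t)\|_{H^s}^2\|w\|_{H^s}^2$ whose coefficient lies in $L^1(0,\infty)$ by \eqref{1.est}. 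Since $w(0)=0$, Gronwall forces $w\equiv 0$. The main obstacle throughout is closing the nonlinear energy estimate without any entropy structure: only the quadratic form of the drift combined with the smallness hypothesis permits absorption into the linear diffusion dissipation, which is exactly why the two conditions \eqref{1.small} and \eqref{1.small2} take the form they do.
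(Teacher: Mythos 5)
Your overall strategy (linearize, close an $H^s$ energy estimate by absorbing the quadratic drift term into the diffusion via the smallness condition, then Gronwall for uniqueness) is the same as the paper's, and your product estimate via the algebra property of $H^s$ is an acceptable substitute for the Moser-type calculus inequality the paper uses. But there are two concrete gaps. First, the claimed uniform-in-$m$ bound for the iterates does not follow from the estimate you wrote. The inequality $\tfrac12\tfrac{d}{dt}\|u\|_{H^s}^2+(\sigma-C^*\sum_{i,j}A_{ij}\|u\|_{H^s})\|\na u\|_{H^s}^2\le 0$ is only available when the \emph{same} function occupies all three slots; for your iteration the right-hand side is $C^*\sum_{i,j}A_{ij}\|u^{(m+1)}\|_{H^s}\|\na u^{(m)}\|_{H^s}\|\na u^{(m+1)}\|_{H^s}$, and even granting inductively that $C^*\sum_{i,j}A_{ij}\|u^{(m+1)}\|_{H^s}\le\sigma$, Young's inequality only yields $f_{m+1}(t)+\sigma\int_0^t g_{m+1}^2\,ds\le f(0)+\sigma\int_0^t g_m^2\,ds$ (writing $f_m=\|u^{(m)}\|_{H^s}^2$ and $g_m=\|\na u^{(m)}\|_{H^s}$), so the bounds degrade linearly in $m$ rather than being uniform, and the inductive hypothesis itself cannot be propagated. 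The paper avoids this by proving local existence first (a contraction argument on a short time interval, with crude $\eta$-dependent constants), deriving the a priori estimate for the actual solution of the nonlinear problem, and then continuing the solution interval by interval; you need some version of this local-to-global structure, since the direct global-in-time iteration does not close.

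Second, your ``continuity/bootstrap argument'' only works under strict inequality in \eqref{1.small}. The hypothesis allows $\|u^0\|_{H^s}$ to equal the threshold $\sigma/(C^*\sum_{i,j}A_{ij})$ exactly, in which case the dissipative coefficient vanishes at $t=0$; once $\|u(t)\|_{H^s}$ exceeds the threshold, the differential inequality gives no sign information on the derivative, so the usual first-crossing argument produces no contradiction. This borderline case is precisely why the paper proves a separate nonlinear Gronwall lemma (Lemma \ref{lem.sqrt}), which integrates the separable inequality $f'\le -g(a-b\sqrt{f})$ explicitly and then treats the critical datum by approximating $f(0)=(a/b)^2$ from below; the introduction singles this out as the delicate step. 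You need either to invoke such a lemma or to weaken the statement to strict inequality. Your uniqueness argument and the nonnegativity of the iterates are fine and match the paper's.
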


Since we do not use the structure of the equations, we can only expect the
global existence of solutions for sufficiently small initial data.
The proof of this result is based on the Banach fixed-point theorem and
a priori estimates and is rather standard. We present it for completeness.

\begin{proposition}[Existence for the limiting cross-diffusion system]
\label{prop.ex2}
Let $u^0\in H^s(\R^d;\R^n)$ with $s>d/2+1$ such that $u_i^0\ge 0$
in $\R^d$ and \eqref{1.small2} holds. Then there exists a unique glob\-al solution 
$u=(u_1,\ldots,u_n)$ to problem \eqref{1.cds} satisfying $u_i(t)\ge 0$ 
in $\R^d$, $t>0$, $u\in L^\infty(0,\infty;H^s(\R^d;\R^n))\cap
L^2(0,\infty;H^{s+1}(\R^d;\R^n))$, and
\begin{equation}\label{1.estu}
  \sup_{t>0}\|u(t)\|_{H^s}^2 + \gamma\|u\|_{L^2(0,\infty;H^{s+1})}^2
	\le \|u^0\|_{H^s}^2.
\end{equation}
Moreover, let $u_\eta$ be the solution to problem \eqref{1.uieta}. Then the following
error estimate holds for any $T>0$:
\begin{equation}\label{1.error}
  \|u_\eta-u\|_{L^\infty(0,T;L^2)} + \|\na(u_\eta-u)\|_{L^2(0,T;L^2)} \le C(T)\eta
\end{equation}
for some constant $C(T)>0$. 
\end{proposition}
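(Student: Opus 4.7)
The plan splits into two tasks: establishing existence, uniqueness and the energy bound \eqref{1.estu} for \eqref{1.cds}, and then deriving the error estimate \eqref{1.error}. For the first task, I would repeat verbatim the Banach fixed-point / energy scheme behind Proposition \ref{prop.ex}, with the convolution kernel $V_{ij}^\eta$ formally replaced by $a_{ij}\delta$. Concretely, testing the $i$-th equation against $(-\Delta)^s u_i$ and using the Moser product estimate $\|u_i\na u_j\|_{H^s}\le C\|u\|_{H^s}\|u\|_{H^{s+1}}$ (valid for $s>d/2$) yields
\begin{equation*}
\tfrac12\frac{d}{dt}\|u\|_{H^s}^2 + \sigma\|u\|_{H^{s+1}}^2 \le C^*\sum_{i,j}|a_{ij}|\|u\|_{H^s}\|u\|_{H^{s+1}}^2.
\end{equation*}
Since $|a_{ij}|\le A_{ij}$, the smallness assumption \eqref{1.small2} forces $C^*\sum A_{ij}\|u\|_{H^s}\le \sigma-\gamma$, so the nonlinearity is absorbed into the dissipation and \eqref{1.estu} follows by time integration. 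Nonnegativity comes from the parabolic maximum principle applied to the equation with the drift $\sum_j a_{ij}\na u_j$ frozen, and uniqueness is the special case $\eta=0$ of the error estimate below.

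For the error estimate, set $w_i=u_{\eta,i}-u_i$. Using $\na V_{ij}^\eta * u_{\eta,j}=V_{ij}^\eta*\na u_{\eta,j}$ and adding/subtracting $a_{ij}u_{\eta,i}\na u_j$ gives the algebraic split
\begin{equation*}
u_{\eta,i}\,\na V_{ij}^\eta * u_{\eta,j} - a_{ij}u_i\na u_j = u_{\eta,i}(V_{ij}^\eta*\na w_j) + u_{\eta,i}\bigl(V_{ij}^\eta*\na u_j - a_{ij}\na u_j\bigr) + a_{ij}w_i\na u_j,
\end{equation*}
so that $w_i$ solves a linear parabolic equation in divergence form with zero initial datum. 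Testing by $w_i$, integrating by parts and summing over $i$, I would control the three pieces separately. Young's convolution inequality gives $\|V_{ij}^\eta*\na w_j\|_{L^2}\le A_{ij}\|\na w_j\|_{L^2}$, so the first piece contributes at most $\sum_{i,j}\|u_{\eta,i}\|_{L^\infty}A_{ij}\|\na w_i\|_{L^2}\|\na w_j\|_{L^2}$, which is absorbable into $\sum_i\sigma_i\|\na w_i\|_{L^2}^2$ thanks to $H^s\hookrightarrow L^\infty$ together with the smallness \eqref{1.small2}. For the second piece, the mollifier estimate
\begin{equation*}
\|V_{ij}^\eta*f - a_{ij}f\|_{L^2}\le \eta A_{ij}\|\na f\|_{L^2},
\end{equation*}
which uses only $\int V_{ij}^\eta = a_{ij}$ and $\operatorname{supp} V_{ij}^\eta\subset B_\eta(0)$, applied with $f=\na u_j$, produces an $O(\eta\|u_j\|_{H^2})$ $L^2$ forcing. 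The third piece is bounded by $|a_{ij}|\|\na u_j\|_{L^\infty}\|w_i\|_{L^2}\|\na w_i\|_{L^2}$ and is handled by Young's inequality, with $\|\na u_j\|_{L^\infty}\le C\|u\|_{H^s}$ since $s-1>d/2$. Collecting gives a differential inequality of the form
\begin{equation*}
\frac{d}{dt}\|w\|_{L^2}^2 + c_1\|\na w\|_{L^2}^2 \le c_2\|w\|_{L^2}^2 + c_3\,\eta^2\|u\|_{H^2}^2,
\end{equation*}
with $c_1,c_2,c_3>0$ depending on the uniform bounds coming from \eqref{1.estu} and Proposition \ref{prop.ex}; since $\|u(\cdot)\|_{H^2}\in L^2_{\mathrm{loc}}(0,\infty)$, Gronwall's lemma and $w(0)=0$ yield \eqref{1.error}.

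The main obstacle is the first piece of the algebraic split: it carries a factor $\|\na w_j\|_{L^2}$ which sits at the same level as the parabolic dissipation, so it cannot be handled by a Gronwall argument on $\|w\|_{L^2}$ alone. The smallness hypothesis \eqref{1.small2} is tuned precisely so that $\sum_j\|u_{\eta,i}\|_{L^\infty}A_{ij}$ stays strictly below $\sigma$, which is what makes the absorption into $\sum_i\sigma_i\|\na w_i\|_{L^2}^2$ possible and what ultimately produces the clean linear rate in $\eta$ in \eqref{1.error}.
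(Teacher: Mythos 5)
Your treatment of the error estimate \eqref{1.error} is essentially the paper's argument. The paper uses the algebraically equivalent decomposition
$$
  u_{\eta,i}V_{ij}^\eta*\na u_{\eta,j}-a_{ij}u_i\na u_j
	=(u_{\eta,i}-u_i)V_{ij}^\eta*\na u_{\eta,j}
	+u_i\big(V_{ij}^\eta*\na u_{\eta,j}-a_{ij}\na u_{\eta,j}\big)
	+a_{ij}u_i\na(u_{\eta,j}-u_j),
$$
so the $O(\eta)$ mollifier error lands on $\na u_{\eta,j}$ instead of $\na u_j$, and the term that must be absorbed by the dissipation via \eqref{1.small2} is $a_{ij}u_i\na w_j$ rather than your $u_{\eta,i}\,V_{ij}^\eta*\na w_j$; both variants work, and your identification of that absorption as the crux, together with the mean-value/support estimate for $V_{ij}^\eta*f-a_{ij}f$, matches the paper exactly. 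Your a priori estimate and the observation that uniqueness is the $\eta=0$ case of the stability computation are also consistent with the paper.

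The gap is in the existence claim. The paper does \emph{not} run a fixed-point argument for the local system \eqref{1.cds}: it obtains $u$ as the limit $\eta\to 0$ of the solutions $u_\eta$ of \eqref{1.uieta}, using the $\eta$-uniform bound \eqref{1.est}, the Aubin--Lions lemma on balls $B_R(0)$, and the weak convergence $V_{ij}^\eta*\na u_{\eta,j}\rightharpoonup a_{ij}\na u_j$; the bound \eqref{1.estu} then follows by weak lower semicontinuity, and nonnegativity is inherited from $u_{\eta,i}\ge 0$. Your plan to ``repeat verbatim'' the Banach scheme of Proposition \ref{prop.ex} with $V_{ij}^\eta$ replaced by $a_{ij}\delta$ does not go through verbatim: in the contraction step of Lemma \ref{lem.loc} the term $\diver\big(\sum_j(Sw)_i\na V_{ij}^\eta*(v_j-w_j)\big)$ is tamed by Young's inequality, $\|\na V_{ij}^\eta*(v_j-w_j)\|_{L^2}\le\|\na V_{ij}^\eta\|_{L^1}\|v_j-w_j\|_{L^2}$, i.e.\ the convolution prevents a derivative from falling on the difference $v-w$, which is measured only in $\sup_t\|\cdot\|_{L^2}$. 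With the Dirac kernel this term becomes $a_{ij}(Sw)_i\na(v_j-w_j)$ and is no longer controlled by that metric; you would have to enlarge the metric to include $\|\na(v-w)\|_{L^2(0,T^*;L^2)}$ and then invoke the smallness of $\|u^0\|_{H^s}$ (shrinking $T^*$ alone does not help, since the offending constant is not small in $T^*$) to close the contraction. That is a workable but genuinely different and more delicate argument, which you should either carry out in detail or replace by the paper's compactness route through the nonlocal approximation.
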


The proposition is proved by performing the limit $\eta\to 0$ in
\eqref{1.uieta} which is possible in view of the uniform estimate \eqref{1.est}.
The error estimate \eqref{1.error} follows from the uniform bounds and the
smallness condition \eqref{1.small}. 

For our main result, we need to make precise the stochastic setting. 
Let $(\Omega,\F,(\F_t)_{t\ge 0},{\mathcal P})$ be a filtered probability space
and let $(W_i^k(t))_{t\ge 0}$ for $i=1,\ldots,n$, $k=1,\ldots,N$ be 
$d$-dimensional $\F_t$-Brownian motions that are independent of the 
random variables $\xi_i^k$. We assume that the Brownian motions are independent
and that the initial data $\xi_i^1,\ldots,\xi_i^N$ are independent and 
identically distributed random variables with the common probability density function
$u_i^0$. 

We prove in Section \ref{sec.stoch} that if $s>d/2+2$ and the initial density
$u^0$ satisfies the smallness condition \eqref{1.small}, the stochastic 
differential systems \eqref{1.Xi}, \eqref{1.barXi}, and \eqref{1.hatXi}
have pathwise unique strong solutions; also see Remark \ref{rem}.

\begin{theorem}[Error estimate for the stochastic system]\label{thm.main}
Under the aforementioned as\-sump\-tions, let $s>d/2+2$ and let
$X_{\eta,i}^{k,N}$ and $\widehat X_{i}^k$ be solutions to the problems
\eqref{1.Xi} and \eqref{1.hatXi}, respectively. Furthermore, let $0<\eps<1$
be sufficiently small and choose $N\in\N$ such that $\eps\log N\ge \eta^{-2d-4}$.
Then, for any $t>0$, 
$$
  \E\bigg(\sum_{i=1}^n\sup_{0<s<t}\sup_{k=1,\ldots,N}
	\big|X_{\eta,i}^{k,N}(s) - \widehat X_{i}^k(s)\big|\bigg) 
	\le C(t)\eta,
$$
where the constant $C(t)$ depends on $t$, $n$, $\|D^2 V_{ij}\|_{L^\infty}$,
and the initial datum $u^0$.
\end{theorem}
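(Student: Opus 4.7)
The strategy is to insert the intermediate process $\bar X^k_{\eta,i}$ from \eqref{1.barXi} as a pivot and apply the triangle inequality,
\[
|X^{k,N}_{\eta,i}(s) - \widehat X^k_i(s)| \le |X^{k,N}_{\eta,i}(s) - \bar X^k_{\eta,i}(s)| + |\bar X^k_{\eta,i}(s) - \widehat X^k_i(s)|.
\]
The key structural observation is that by construction the three SDE systems \eqref{1.Xi}, \eqref{1.barXi}, \eqref{1.hatXi} share the same Brownian motions $W^k_i$ and the same initial data $\xi^k_i$, so each pairwise difference is a pathwise deterministic integral of a drift difference and the stochastic integrals cancel on subtraction.

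For the pivot-to-limit piece $\bar X^k_{\eta,i} - \widehat X^k_i$, I would split the drift difference into a Lipschitz bracket $(\na V^\eta_{ij}*u_{\eta,j})(\bar X^k_{\eta,i}) - (\na V^\eta_{ij}*u_{\eta,j})(\widehat X^k_i)$ plus a consistency bracket $\big[(\na V^\eta_{ij}*u_{\eta,j}) - a_{ij}\na u_j\big](\widehat X^k_i)$. The first bracket is controlled by the \emph{$\eta$-uniform} Lipschitz constant of $\na V^\eta_{ij}*u_{\eta,j}$, obtained by moving the derivatives onto the density: $D^2(V^\eta*u_{\eta,j})=V^\eta*D^2 u_{\eta,j}$, so that $\|V^\eta\|_{L^1}$ stays bounded as $\eta\to 0$ while $\|D^2 u_{\eta,j}\|_\infty$ is uniformly controlled via the uniform $H^s$ bound \eqref{1.est} together with the Sobolev embedding $H^s\hookrightarrow C^2$ for $s>d/2+2$. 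The consistency bracket is of order $\eta$ in $L^\infty$ by Proposition \ref{prop.ex2} (combining the error estimate \eqref{1.error} with the uniform $H^s$ bounds and the standard mollification estimate $\|V^\eta*\na u_j-a_{ij}\na u_j\|_{L^\infty}=O(\eta)$). A standard linear Gronwall then yields $\E\sup_{s<t,\,k}|\bar X^k_{\eta,i}(s) - \widehat X^k_i(s)| \le C(t)\eta$.

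The harder Oelschl\"ager-type step bounds $X^{k,N}_{\eta,i} - \bar X^k_{\eta,i}$. Decomposing the drift difference as $I^k_{ij,1}+I^k_{ij,2}$, where $I_1$ changes the argument from $X^{k,N}_{\eta,i}-X^{\ell,N}_{\eta,j}$ to $\bar X^k_{\eta,i}-\bar X^\ell_{\eta,j}$ and $I_2$ replaces the empirical sum $\frac{1}{N}\sum_\ell$ by the convolution with $u_{\eta,j}$, one bounds $I_1$ pathwise by $C\eta^{-d-2}\big(|X^{k,N}_{\eta,i}-\bar X^k_{\eta,i}| + \sup_\ell|X^{\ell,N}_{\eta,j}-\bar X^\ell_{\eta,j}|\big)$ via $\|D^2 V^\eta_{ij}\|_\infty$. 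The fluctuation $I_2$ is, conditionally on $\bar X^k_{\eta,i}$, an average of $N$ independent mean-zero summands bounded in modulus by $\|\na V^\eta_{ij}\|_\infty\le C\eta^{-d-1}$; a sub-Gaussian Hoeffding inequality together with a union bound over $k=1,\ldots,N$ and a chaining over a fine time grid then gives $\E\sup_{s\le t,\,k}|I^k_{ij,2}(s)|\le C\eta^{-d-1}\sqrt{\log N/N}$.

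The main obstacle is closing the resulting integral inequality for $f(t):=\E\sum_i\sup_{s\le t}\sup_k|X^{k,N}_{\eta,i}(s) - \bar X^k_{\eta,i}(s)|$: a naive linear Gronwall pays a factor $\exp(Ct\eta^{-d-2})$, which would force $N$ to be doubly exponential in $\eta^{-1}$, far stronger than the stated condition $\eps\log N\ge\eta^{-2d-4}$. To circumvent this I would introduce a stopping time at which $f$ first reaches a threshold of order $\eta$, exploit inside the stopped region that part of the $\eta^{-d-2}$ factor in $I_1$ can be traded against the smallness of $f$ itself (equivalently, that on the averaged side the drift can be replaced by its $\eta$-uniform Lipschitz convolution modulo an $I_2$-type remainder), and close the resulting sub-linear inequality with the nonlinear square-root Gronwall Lemma \ref{lem.sqrt} from the appendix. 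Its output is a polynomial-in-$t$ bound that matches the scaling $\eps\log N\ge\eta^{-2d-4}$ and yields $\E\sup_{s\le t,\,k}|X^{k,N}_{\eta,i}-\bar X^k_{\eta,i}|\le C(t)\eta$. Combining with the pivot-to-limit estimate through the triangle inequality and summing over $i$ concludes the proof.
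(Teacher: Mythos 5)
Your overall architecture --- triangle inequality through the intermediate process $\bar X^k_{\eta,i}$, cancellation of the Brownian motions, a Lipschitz/consistency splitting for $\bar X^k_{\eta,i}-\widehat X^k_i$, and a Lipschitz/fluctuation splitting with a mean-zero independence argument for $X^{k,N}_{\eta,i}-\bar X^k_{\eta,i}$ --- coincides with the paper's, and your treatment of the pivot-to-limit piece (uniform-in-$\eta$ Lipschitz constant via $\|V^\eta_{ij}\|_{L^1}\|D^2u_{\eta,j}\|_{L^\infty}$ with $s>d/2+2$, an $O(\eta)$ consistency error, then linear Gronwall) is essentially Lemma \ref{lem.error2}.

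The gap is in your closing step for $X^{k,N}_{\eta,i}-\bar X^k_{\eta,i}$. You assert that a linear Gronwall "pays a factor $\exp(Ct\eta^{-d-2})$" which would force $N$ to be doubly exponential in $\eta^{-1}$, and you therefore replace it by a stopping-time argument closed with Lemma \ref{lem.sqrt}. This is a miscalculation: the paper's Lemma \ref{lem.error1} performs exactly the ``naive'' linear Gronwall on the squared quantity $S_t$, obtaining $\E(S_t)\le Ct^2N^{-1}\eta^{-2d-2}\exp(Ct\eta^{-2d-4})$, and the hypothesis $\eps\log N\ge\eta^{-2d-4}$ is precisely what converts the exponential into $N^{C\eps t}$, giving $\E(S_t)\le CN^{-1+C(1+t)\eps}$. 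This requires $N\ge e^{\eta^{-2d-4}/\eps}$, i.e.\ singly exponential in a power of $\eta^{-1}$, which is exactly what the theorem assumes; the same absorption works for your first-moment version, since $\eta^{-d-1}\sqrt{\log N/N}\,e^{Ct\eta^{-d-2}}\le CN^{-1/2+C(1+t)\eps}\sqrt{\log N}$. Your proposed substitute does not stand on its own: Lemma \ref{lem.sqrt} treats a differential inequality of the form $f'\le -g(a-b\sqrt f)$ and is used in the paper only to propagate the $H^s$ smallness of the PDE solutions in Section \ref{sec.ex}; it does not apply to the integral inequality $f(t)\le A+C\eta^{-d-2}\int_0^t f\,ds$ you arrive at, and the step where ``part of the $\eta^{-d-2}$ factor is traded against the smallness of $f$'' is not made precise and has no evident justification, because the Lipschitz constant of $\na V^\eta_{ij}$ genuinely is of order $\eta^{-d-2}$ no matter how small the displacement. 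Deleting the stopping-time detour, running the plain linear Gronwall, and then invoking the hypothesis relating $N$ and $\eta$ repairs the argument.
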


The idea of the proof is to derive error estimates for the differences
$X_{\eta,i}^{k,N}-\bar X_{\eta,i}^k$ and $\bar X_{\eta,i}^k-\widehat X_i^k$
(where $\bar X_{\eta,i}^{k,N}$ solves \eqref{1.barXi}) and to use
\begin{align*}
  \E\bigg(\sum_{i=1}^n\sup_{0<s<t}\sup_{k=1,\ldots,N}
	\big|X_{\eta,i}^{k,N} - \widehat X_i^k\big|\bigg)
	&\le \E\bigg(\sum_{i=1}^n\sup_{0<s<t}\sup_{k=1,\ldots,N}
	\big|X_{\eta,i}^{k,N}-\bar X_{\eta,i}^k\big|\bigg) \\
	&\phantom{xx}{}+ \E\bigg(\sum_{i=1}^n\sup_{0<s<t}\sup_{k=1,\ldots,N}
	\big|\bar X_{\eta,i}^k-\widehat X_i^k\big|\bigg).
\end{align*}
The expectations on the right-hand side are estimated by taking the difference
of the solutions to the corresponding stochastic differential equations,
exploiting the Lipschitz continuity of $\na V_{ij}^\eta$, and observing that
$\|V_{ij}^\eta * \na u_j - a_{ij}\na u_j\|_{L^2(0,t;L^2)}\le C\eta$.

The paper is organized as follows. Sections \ref{sec.ex} and \ref{sec.ex2} are 
concerned with the proof of Propositions \ref{prop.ex} and \ref{prop.ex2},
respectively. The existence of solutions to the stochastic systems is shown
in Section \ref{sec.stoch}. The main result (Theorem \ref{thm.main}) is then
proved in Section \ref{sec.proof}. Finally, the appendix recalls some auxiliary
results needed in our analysis.


\section{Existence for the nonlocal diffusion system \eqref{1.uieta}}\label{sec.ex}

We show Proposition \ref{prop.ex} whose proof is split into several lemmas.

\begin{lemma}[Local existence of solutions]\label{lem.loc}
Let $u^0=(u_1^0,\ldots,u_n^0)\in H^s(\R^d;\R^n)$ with $s>d/2+1$ and
$u_i^0\ge 0$ in $\R^d$. Then there
exists $T^*>0$ such that \eqref{1.uieta} possesses the unique solution
$u_\eta=(u_{\eta,1},\ldots,u_{\eta,n})\in L^\infty(0,T^*;H^s(\R^d;\R^n))\cap
L^2(0,T^*;H^{s+1}(\R^d;\R^n))$ satisfying $u_{\eta,i}(t)\ge 0$ in $\R^d$ for $t>0$.
The time $T^*>0$ depends on $\|u^0\|_{H^s}$ such that $T^*\to 0$ if
$\|u^0\|_{H^s}\to\infty$.
\end{lemma}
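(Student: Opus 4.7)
The plan is to apply Banach's fixed-point theorem in the space
$$X_T := L^\infty(0,T;H^s(\R^d;\R^n))\cap L^2(0,T;H^{s+1}(\R^d;\R^n))$$
endowed with its natural norm. Given $v\in X_T$ in a closed ball $B_R:=\{w\in X_T:\|w\|_{X_T}\le R\}$, I would define $\Phi(v)=u$ as the solution of the decoupled linear heat system
$$\pa_t u_i - \sigma_i\Delta u_i = \diver F_i(v),\qquad u_i(0)=u_i^0,\qquad i=1,\ldots,n,$$
with fixed source $F_i(v):=v_i\sum_{j=1}^n\na V_{ij}^\eta*v_j$. Standard parabolic regularity for the heat equation with divergence-form right-hand side yields the energy estimate
$$\|\Phi(v)\|_{X_T}^2 \le \|u^0\|_{H^s}^2 + \frac{C}{\sigma}\|F(v)\|_{L^2(0,T;H^s)}^2.$$

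To close the argument, I would bound $F(v)$ using that $H^s$ is a Banach algebra for $s>d/2$ together with the convolution estimate $\|\na V_{ij}^\eta*w\|_{H^s}\le C\eta^{-1}\|w\|_{H^s}$ (via Young's inequality applied derivative by derivative). These give $\|F_i(v)\|_{H^s}\le C(\eta)\|v\|_{H^s}^2$ and hence $\|\Phi(v)\|_{X_T}^2 \le \|u^0\|_{H^s}^2 + C(\eta)\,T R^4$. Choosing $R=2\|u^0\|_{H^s}$ and $T^*\le c(\eta)\|u^0\|_{H^s}^{-2}$ for a suitable $c(\eta)>0$ ensures $\Phi(B_R)\subset B_R$, and in particular $T^*\to 0$ as $\|u^0\|_{H^s}\to\infty$, as required. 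Contraction on $B_R$ then follows from the bilinearity of $F$: writing $F(v)-F(\tilde v)$ as a linear expression in $v-\tilde v$ with coefficients depending on $v+\tilde v$, the same energy estimate produces $\|\Phi(v)-\Phi(\tilde v)\|_{X_{T^*}}\le\tfrac12\|v-\tilde v\|_{X_{T^*}}$ after possibly shrinking $T^*$ further, and Banach's theorem yields a unique fixed point $u_\eta\in X_{T^*}$.

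Nonnegativity is established a posteriori. With $u_\eta\in X_{T^*}$ in hand, the drift $b_i:=\sum_j\na V_{ij}^\eta*u_{\eta,j}$ lies in $L^\infty(0,T^*;W^{1,\infty}(\R^d))$, so the equation for $u_{\eta,i}$ reads
$$\pa_t u_{\eta,i}-\sigma_i\Delta u_{\eta,i}-\diver(u_{\eta,i}\,b_i)=0$$
with known smooth bounded coefficients. Testing this against the negative part $(u_{\eta,i})_-:=\min(u_{\eta,i},0)$, integrating by parts, and applying Gronwall's inequality give $\|(u_{\eta,i}(t))_-\|_{L^2}^2\le e^{C(\eta)t}\|(u_i^0)_-\|_{L^2}^2=0$, whence $u_{\eta,i}\ge 0$ on $[0,T^*]$.

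The main obstacle I anticipate is the Moser-type estimate for $\|F(v)\|_{L^2_tH^s}$: expanding $\pa^\alpha(v_i\,\na V_{ij}^\eta*v_j)$ via the Leibniz rule produces terms in which high-order derivatives land either on $v_i$ or on the convolution, and one must control the full sum by $C(\eta)\|v\|_{H^s}^2$ using the algebra property together with Young's inequality for convolutions. Since this lemma is only a local-in-time statement, the $\eta$-dependence of the constants is harmless; the essential smallness is absorbed into the short time $T^*$ rather than into the initial datum.
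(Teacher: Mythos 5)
Your proposal is correct and rests on the same pillars as the paper's proof (Banach fixed point for a linearized problem, $H^s$ energy estimates via $D^\alpha$ differentiation, Young's convolution inequality to handle $\na V_{ij}^\eta * v_j$), but the implementation differs in three genuine ways. First, you freeze the \emph{entire} quadratic nonlinearity $F_i(v)=v_i\sum_j\na V_{ij}^\eta * v_j$, so your map solves a pure heat equation with a given divergence-form source; the paper freezes only the convolution factor and keeps $u_i$ inside the divergence, so its map solves a linear advection--diffusion equation. Your choice makes the map bilinear in $v$, which is why contraction in the full strong norm of $X_T$ goes through cleanly; the paper instead works on a ball bounded in $H^s$ but contracts in the weaker metric $\sup_t\|\cdot\|_{L^2}$, a standard device that here is not strictly necessary. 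Second, the paper builds nonnegativity into the iteration by replacing $u_i$ with $u_i^+$ in the linearized equation and testing with $u_i^-$ (so that $u_i^+\na u_i^-=0$ kills the drift term), whereas you recover it a posteriori by testing the final equation against $(u_{\eta,i})_-$ and using $\|b_i\|_{L^\infty}\le\sum_j\|\na V_{ij}^\eta\|_{L^1}\|u_{\eta,j}\|_{L^\infty}<\infty$ plus Gronwall; both are valid, and your route has the minor advantage of not having to check that the truncated fixed-point equation coincides with \eqref{1.uieta} at the fixed point. Third, your invariance argument is quantitative ($T^*\le c(\eta)\|u^0\|_{H^s}^{-2}$ with $R=2\|u^0\|_{H^s}$), which exhibits the claimed dependence $T^*\to 0$ as $\|u^0\|_{H^s}\to\infty$ more explicitly than the paper's Gronwall-based choice $e^{C(\eta)MT^*}\le M/(M-1)$. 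The only point worth making explicit in a write-up is that Banach's theorem gives uniqueness only within the ball $B_R$; uniqueness in all of $X_{T^*}$ requires the standard continuation/Gronwall argument (which the paper also defers, to its separate uniqueness lemma).
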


\begin{proof}
The idea is to apply the Banach fixed-point theorem. For this, we introduce
\begin{align}
  Y &= \Big\{v\in L^\infty(0,T^*;H^s(\R^d;\R^n))\cap L^2(0,T^*;H^{s+1}(\R^d;\R^n)): \\
	&\phantom{xxx}\sup_{0<t<T^*}\|v(\cdot,t)\|_{H^s}^2 \le M:= 1 + \|u^0\|_{H^s}^2\Big\},
\end{align}
endowed with the metric 
$\operatorname{dist}(u,w)=\sup_{0<t<T^*}\|(u-w)(t)\|_{L^2}$, where $T^*>0$ will 
be determined later. The fixed-point operator $S:Y\to Y$ is defined by 
$Sv=u$, where $u$ is the unique solution to the Cauchy problem
\begin{equation}\label{2.cp}
  \pa_t u_i = \sigma_i\Delta u_i + \diver\bigg(\sum_{j=1}^n u_i^+\na V_{ij}^\eta
	* v_j\bigg), \quad u_i(0)=u_i^0 \mbox{ in }\R^d,
\end{equation}
and $u_i^+=\max\{0,u_i\}$. 
The existence of a unique solution $u\in C^0([0,T];H^s(\R^d))\cap L^2(0,T;H^{s+1}(\R^d))$
to this linear advection-diffusion problem follows from semigroup theory since
$u^0\in H^s(\R^d;\R^n)$. Taking the test function $u_i^- = \min\{0,u_i\}$ in the
weak formulation of \eqref{2.cp} yields
$$
  \frac12\frac{d}{dt}\int_{\R^d}(u_i^-)^2 dx
	+ \sigma_i\int_{\R^d}|\na u_i^-|^2 dx
	= -\int_{\R^d}\sum_{j=1}^n u_i^+(\na V_{ij}^\eta * v_j)\cdot\na u_i^- dx.
$$
Since $u_i^+\na u_i^-=0$ in $\R^d$, we infer that $u_i^-=0$ in $\R^d$, showing
that $u_i(t)$ is nonnegative for all $t\in(0,T^*)$.

We prove that $\sup_{0<t<T^*}\|u(\cdot,t)\|_{H^s}^2 \le M$ for sufficiently small
values of $T^*>0$. Then $u\in Y$
and $S:Y\to Y$ is well defined. We apply the differential operator $D^\alpha$
for an arbitrary multi-index $\alpha\in\N^d$ of order $|\alpha|\le s$ to \eqref{2.cp},
multiply the resulting equation by $D^\alpha u_{i}$, and integrate
over $\R^d$:
$$
  \frac12\frac{d}{dt}\int_{\R^d}|D^\alpha u_i|^2 dx
	+ \sigma_i\int_{\R^d}|\na D^\alpha u_i|^2 dx
	= -\int_{\R^d}\sum_{j=1}^n D^\alpha(u_i\na V_{ij}^\eta * v_j)\cdot\na D^\alpha u_i dx.
$$
We sum these equations from $i=1,\ldots,n$, apply the Cauchy-Schwarz inequality
to the integral on the right-hand side, and the Moser-type calculus inequality
(Lemma \ref{lem.moser}):
\begin{align*}
  \frac12&\frac{d}{dt}\int_{\R^d}|D^\alpha u|^2 dx
	+ \sigma\int_{\R^d}|\na D^\alpha u|^2 dx
	\le \sum_{i,j=1}^n \big\|D^\alpha(u_i\na V_{ij}^\eta * v_j)\big\|_{L^2}
	\|\na D^\alpha u_i\|_{L^2} \\
	&\le C(\eps)\sum_{i,j=1}^n \big\|D^\alpha(u_i\na V_{ij}^\eta * v_j)\big\|_{L^2}^2
	+ \eps n\|\na D^\alpha u\|_{L^2}^2 \\
	&\le C(\eps)\sum_{i,j=1}^n\Big(\|u_i\|_{L^\infty}
	\big\|D^s(\na V_{ij}^\eta * v_j)\big\|_{L^2} + \|D^s u_i\|_{L^2}
	\|\na V_{ij}^\eta * v_j\|_{L^\infty}\Big)^2 \\
	&\phantom{xx}{}+ \eps n\|\na D^\alpha u\|_{L^2}^2,
\end{align*}
where we recall that $\sigma=\min_{i=1,\ldots,n}\sigma_i>0$
and $\eps$ is any positive number. 
The last term on the right-hand side can be absorbed by the second term on the
left-hand side if $\eps\le\sigma/(2n)$. Hence, summing over all multi-indices
$\alpha$ of order $|\alpha|\le s$, using Young's convolution inequality
(Lemma \ref{lem.young}), and the inequality $\|\na V_{ij}^\eta\|_{L^1}\le C(\eta)$, 
we find that
\begin{align*}
  \frac{d}{dt}\|u\|_{H^s}^2 + \frac{\sigma}{2}\|\na u\|_{H^s}^2
	&\le C\sum_{i,j=1}^n\Big(\|u\|_{L^\infty}\|\na V_{ij}^\eta\|_{L^1}\|D^s v_j\|_{L^2}
	+ \|D^s u_i\|_{L^2}\|\na V_{ij}^\eta\|_{L^1}\|v_j\|_{L^\infty}\Big)^2 \\
  &\le C(\eta)\|u\|_{H^s}^2\|v\|_{H^s}^2,
\end{align*}
where in the last step we have taken into account the continuous embedding
$H^s(\R^d)\hookrightarrow L^\infty(\R^d)$.
As $v\in Y$ and consequently $\|v(t)\|_{H^s}^2\le M$, we infer that
$$
  \frac{d}{dt}\|u\|_{H^s}^2 \le C(\eta) M\|u\|_{H^s}^2.
$$
Gronwall's inequality then yields
$$
  \|u(t)\|_{H^s}^2 \le \|u^0\|_{H^s}^2 e^{C(\eta)Mt}
	= (M-1)e^{C(\eta)MT^*} \le M, \quad 0<t<T^*,
$$
if we choose $T^*>0$ so small that $e^{C(\eta)MT^*}\le M/(M-1)$. 
We conclude that $u\in Y$.

Note that the time $T^*$ depends on $M$ and hence on $u^0$ in such a way that
$T^*$ becomes smaller if $\|u^0\|_{H^s}$ is large but $T^*>0$ is bounded from below
if $\|u^0\|_{H^s}$ is small.

It remains to show that the map $S:Y\to Y$ is a contraction, possibly for a smaller
value of $T^*>0$. Let $v,w\in Y$ and take the difference of the equations satisfied
by $Sv$ and $Sw$, respectively:
\begin{align*}
  \pa_t&\big((Sv)_i-(Sw)_i\big)
	- \sigma_i\Delta\big((Sv)_i-(Sw)_i\big) \\
	&= \diver\bigg(\sum_{j=1}^n\big((Sv)_i-(Sw)_i\big)\na V_{ij}^\eta * v_j\bigg)
	+ \diver\bigg(\sum_{j=1}^n (Sw)_i\na V_{ij}^\eta * (v_j-w_j)\bigg).
\end{align*}
Multiplying these equations by $(Sv)_i-(Sw)_i$, summing from $i=1,\ldots,n$,
integrating over $\R^d$, and using the Cauchy-Schwarz inequality leads to
\begin{align*}
  \frac12\frac{d}{dt}&\int_{\R^d}|Sv-Sw|^2 dx 
	+ \sigma\int_{\R^d}|\na(Sv-Sw)|^2 dx \\
	&\le \sum_{i,j=1}^n \big\|((Sv)_i-(Sw)_i)\na V_{ij}^\eta * v_j\big\|_{L^2}
	\|\na((Sv)_i-(Sw)_i)\|_{L^2} \\
	&\phantom{xx}{}+ \sum_{i,j=1}^n \big\|(Sw)_i\na V_{ij}^\eta * (v_j-w_j)\big\|_{L^2}
	\|\na((Sv)_i-(Sw)_i)\|_{L^2}.
\end{align*}
We deduce from Young's convolution inequality that
\begin{align*}
  \frac12\frac{d}{dt}&\int_{\R^d}|Sv-Sw|^2 dx 
	+ \frac{\sigma}{2}\int_{\R^d}|\na(Sv-Sw)|^2 dx \\
	&\le C(\sigma)\sum_{i,j=1}^n\big\|((Sv)_i-(Sw)_i)\na V_{ij}^\eta * v_j\big\|_{L^2}^2
	+ C(\sigma)\sum_{i,j=1}^n \big\|(Sw)_i\na V_{ij}^\eta * (v_j-w_j)\big\|_{L^2}^2 \\
	&\le C(\sigma)\sum_{i,j=1}^n\|\na V_{ij}^\eta * v_j\|_{L^\infty}^2
	\|(Sv)_i-(Sw)_i\|_{L^2}^2 \\
	&\phantom{xx}{}+ C(\sigma) \max_{i=1,\ldots,n}\|(Sw)_i\|_{L^\infty}^2\sum_{i,j=1}^n
	\|\na V_{ij}^\eta * (v_j-w_j)\|_{L^2}^2 \\
	&\le C(\sigma)\sum_{i,j=1}^n\|\na V_{ij}^\eta\|_{L^1}^2\|v_j\|_{L^\infty}^2
	\|Sv-Sw\|_{L^2}^2 \\
	&\phantom{xx}{}+ C(\sigma)\|Sw\|_{L^\infty}^2\sum_{i,j=1}^n 
	\|\na V_{ij}^\eta\|_{L^1}^2\|v_j-w_j\|_{L^2}^2.
\end{align*}
By definition of the metric on $Y$, we have shown that
$$
  \frac{d}{dt}\|Sv-Sw\|_{L^2}^2
	\le C_1(\sigma,\eta,M)\|Sv-Sw\|_{L^2}^2 + C_2(\sigma,\eta,M)\|v-w\|_{L^2}^2.
$$
The constants $C_1$ and $C_2$ depend on $M$ (and hence on $u^0$) in such a way
that they become larger if $\|u^0\|_{H^s}$ is large but they stay bounded for
small values of $\|u^0\|_{H^s}$.
Thus, because of $v(0)=w(0)$, Gronwall's inequality gives
\begin{align*}
  \|Sv(t)-Sw(t)\|_{L^2}^2 
	&\le C_2(\sigma,\eta,M)\int_0^t e^{C_1(\sigma,\eta)(t-s)}\|v(s)-w(s)\|^2ds \\
	&\le C_2(\sigma,\eta,M)\big(e^{C_1(\sigma,\eta)t}-1\big)
	\sup_{0<s<t}\|v(s)-w(s)\|_{L^2}^2,
\end{align*}
and the definition of the metric leads to
$$
  \operatorname{dist}(Sv,Sw)^2 
	\le C_2(\sigma,\eta,M)\big(e^{C_1(\sigma,\eta,M)T^*}-1\big)\operatorname{dist}(v,w)^2.
$$
Then, choosing $T^*>0$ such that 
$C_2(\sigma,\eta,M)(e^{C_1(\sigma,\eta,M)T^*}-1)\le 1/2$ 
shows that $S:Y\to Y$ is a contraction. Again, $T^*$ depends on $u^0$ but
it is bounded from below for small values of $\|u^0\|_{H^s}$.
Thus, we can apply the Banach fixed-point theorem, finishing the proof.
\end{proof}

\begin{lemma}[A priori estimates]
Let assumption \eqref{1.small2} hold.
For the local solution $u_\eta$ to problem \eqref{1.uieta}, the uniform
estimate \eqref{1.est} holds. In particular, the solution $u_\eta$ can be
extended to a global one.
\end{lemma}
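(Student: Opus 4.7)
The plan is to derive an $H^s$ energy inequality with a cubic nonlinearity of the form
\[
  \tfrac12\tfrac{d}{dt}\|u_\eta\|_{H^s}^2 + \sigma\|\na u_\eta\|_{H^s}^2
  \le C^*\bigg(\sum_{i,j=1}^n A_{ij}\bigg)\|u_\eta\|_{H^s}\|\na u_\eta\|_{H^s}^2,
\]
and then to absorb the right-hand side into the dissipation using (\ref{1.small2}). First, for every multi-index $\alpha$ with $|\alpha|\le s$, I apply $D^\alpha$ to the $i$-th equation of (\ref{1.uieta}), test with $D^\alpha u_{\eta,i}$, integrate over $\R^d$ and sum over $i=1,\dots,n$. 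Integration by parts converts the Laplacian into the dissipation $\sigma_i\|\na D^\alpha u_{\eta,i}\|_{L^2}^2$, while the divergence drift yields exactly
\[
  -\int_{\R^d}\sum_{j=1}^n D^\alpha\big(u_{\eta,i}\,\na V_{ij}^\eta * u_{\eta,j}\big)\cdot\na D^\alpha u_{\eta,i}\,dx,
\]
as in Lemma~\ref{lem.loc}.

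The crucial step, where the argument must depart from the $\eta$-dependent estimate of Lemma~\ref{lem.loc}, is to bound this nonlinear term uniformly in $\eta$. Here I would exploit the identity $\na V_{ij}^\eta * u_{\eta,j} = V_{ij}^\eta * \na u_{\eta,j}$ together with Young's convolution inequality (Lemma~\ref{lem.young}), giving the $\eta$-independent bounds
\[
  \|V_{ij}^\eta*\na u_{\eta,j}\|_{L^\infty}\le A_{ij}\|\na u_{\eta,j}\|_{L^\infty}, \qquad
  \|D^s(V_{ij}^\eta*\na u_{\eta,j})\|_{L^2}\le A_{ij}\|D^s\na u_{\eta,j}\|_{L^2}.
\]
Combining these with the Moser calculus inequality (Lemma~\ref{lem.moser}) and the embedding $H^s\hookrightarrow W^{1,\infty}$ (valid since $s>d/2+1$), I obtain
\[
  \|D^\alpha(u_{\eta,i}\,V_{ij}^\eta*\na u_{\eta,j})\|_{L^2}\le C\,A_{ij}\|u_\eta\|_{H^s}\|\na u_\eta\|_{H^s}.
\]
Summing over $|\alpha|\le s$ and $i,j$, then applying Cauchy--Schwarz on the right-hand side, produces the displayed cubic inequality with a constant $C^*$ depending only on $s$, $d$, $n$.

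To close the estimate, I use (\ref{1.small2}), which says exactly that $C^*(\sum_{i,j}A_{ij})\|u^0\|_{H^s}\le\sigma-\gamma$. Then, as long as $\|u_\eta(t)\|_{H^s}\le\|u^0\|_{H^s}$, the cubic term is controlled by $(\sigma-\gamma)\|\na u_\eta\|_{H^s}^2$, yielding
\[
  \tfrac{d}{dt}\|u_\eta\|_{H^s}^2 + 2\gamma\|\na u_\eta\|_{H^s}^2\le 0.
\]
A standard continuity/bootstrap argument now finishes the proof: setting $T^{**}=\sup\{T\in[0,T^*]:\|u_\eta(t)\|_{H^s}\le\|u^0\|_{H^s}\text{ on }[0,T]\}$, the inequality above shows that $t\mapsto\|u_\eta(t)\|_{H^s}$ is non-increasing on $[0,T^{**}]$, which forces $T^{**}=T^*$. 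Integrating in time then gives (\ref{1.est}) on $[0,T^*]$. Because the local existence time in Lemma~\ref{lem.loc} is bounded from below as long as the $H^s$ norm of the data stays bounded, the uniform bound $\|u_\eta(t)\|_{H^s}\le\|u^0\|_{H^s}$ allows us to restart the local construction indefinitely and obtain a global solution satisfying (\ref{1.est}). I expect the only delicate point to be the shift of the derivative $\na V_{ij}^\eta * u_{\eta,j}=V_{ij}^\eta*\na u_{\eta,j}$; without it, Young's inequality would generate the factor $\|\na V_{ij}^\eta\|_{L^1}\sim\eta^{-1}$ instead of the $\eta$-independent quantity $A_{ij}$, and the smallness condition (\ref{1.small2}) would be useless.
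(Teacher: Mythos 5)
Your energy estimate is exactly the paper's: apply $D^\alpha$, test with $D^\alpha u_{\eta,i}$, and crucially write the drift as $u_{\eta,i}\,V_{ij}^\eta*\na u_{\eta,j}$ so that Young's inequality produces the $\eta$-independent factor $\|V_{ij}^\eta\|_{L^1}=A_{ij}$ rather than $\|\na V_{ij}^\eta\|_{L^1}\sim\eta^{-1}$; combined with Lemma~\ref{lem.moser} and the embedding $H^s\hookrightarrow W^{1,\infty}$ this gives the same cubic differential inequality \eqref{2.ineq}, and the continuation to a global solution by restarting the local construction is also identical. Where you genuinely diverge is in how the differential inequality is closed. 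The paper invokes the nonlinear Gronwall lemma (Lemma~\ref{lem.sqrt}), applied to $f=\|u\|_{H^s}^2$, which it highlights as one of the less routine ingredients; you instead run a continuity/bootstrap argument on $T^{**}=\sup\{T:\|u_\eta(t)\|_{H^s}\le\|u^0\|_{H^s}\text{ on }[0,T]\}$. Your route is more elementary and works here, but the step ``which forces $T^{**}=T^*$'' deserves one more sentence: non-increase on $[0,T^{**}]$ alone does not preclude the norm from creeping above $\|u^0\|_{H^s}$ immediately after $T^{**}$; you must use the strict margin $\gamma$ in \eqref{1.small2} to get, by continuity, a small interval $[T^{**},T^{**}+\delta]$ on which $\|u_\eta(t)\|_{H^s}<\sigma/(C^*\sum_{i,j}A_{ij})$ still holds, so that the dissipation coefficient stays nonnegative there and the norm remains non-increasing, contradicting maximality. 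This is standard and you have $\gamma>0$ at your disposal, so it is not a real gap for the lemma as stated. The trade-off is that the paper's Lemma~\ref{lem.sqrt} also covers the borderline case where only \eqref{1.small} holds with equality (which is what the first part of Proposition~\ref{prop.ex} requires); your bootstrap, as written, needs the strict gap and would have to be supplemented by an approximation argument (as in the proof of Lemma~\ref{lem.sqrt} itself) to handle that endpoint case.
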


\begin{proof}
We proceed similarly as in the proof of Lemma \ref{lem.loc}. We choose $\alpha$
of order $|\alpha|\le s$, apply the operator $D^\alpha$ on both sides of 
\eqref{1.uieta}, multiply the resulting equation by $D^\alpha u_{\eta,i}$,
and integrate over $\R^d$. By the Cauchy-Schwarz inequality, the Moser-type
calculus inequality, and Young's convolution inequality and writing $u_i$ 
instead of $u_{\eta,i}$, we obtain
\begin{align*}
  \frac12&\frac{d}{dt}\int_{\R^d}|D^\alpha u|^2 dx
	+ \sigma\int_{\R^d}|\na D^\alpha u|^2 dx \\
	&\le \sum_{i,j=1}^n\big\|D^\alpha(u_iV_{ij}^\eta * \na u_j)\big\|_{L^2}
	\|\na D^\alpha u_i\|_{L^2} \\
	&\le C_M\sum_{i,j=1}^n\Big(\|u_i\|_{L^\infty}\|D^s(V_{ij}^\eta * \na u_j)\|_{L^2}
	+ \|D^s u_i\|_{L^2}\|V_{ij}^\eta * \na u_j\|_{L^\infty}\Big)
	\|\na D^\alpha u_i\|_{L^2} \\
	&\le C_M\sum_{i,j=1}^n \Big(C\|u\|_{H^s}\|V_{ij}^\eta\|_{L^1}\|D^s \na u\|_{L^2}
	+ \|D^s u\|_{L^2}\|V_{ij}^\eta\|_{L^1}\|\na u\|_{L^\infty}\Big)
	\|\na D^\alpha u\|_{L^2} \\
	&\le C^*\sum_{i,j=1}^nA_{ij}\|u\|_{H^s}\|\na u\|_{H^{s}}\|\na D^\alpha u\|_{L^2},
\end{align*}
where $C_M$ is the constant from Lemma \ref{lem.moser},
$C^*>0$ depends on $C_M$ and the constant of the embedding $H^s(\R^d)\hookrightarrow
L^\infty(\R^d)$, and we have used
$\|V_{ij}^\eta\|_{L^1}=A_{ij}$. Summation of all $|\alpha|\le s$ leads to
$$
  \frac12\frac{d}{dt}\|u\|_{H^s}^2 + \sigma\|\na u\|_{H^s}^2
	\le C^*\sum_{i,j=1}^n A_{ij}\|u\|_{H^s}\|\na u\|_{H^{s}}^2,
$$
which can be written as
\begin{equation}\label{2.ineq}
  \frac{d}{dt}\|u\|_{H^s}^2 
	+ 2\bigg(\sigma - C^*\sum_{i,j=1}^nA_{ij}\|u\|_{H^s}\bigg)
	\|\na u\|_{H^{s}}^2 \le 0.
\end{equation}
This inequality holds for all $t\in[0,T]$, where $T<T^*$.
By Lemma \ref{lem.sqrt}, applied to $f(t)=\|u(t)\|_{H^s}^2$, 
$g(t)=\|\na u(t)\|_{H^s}$, $a=\sigma$, and $b=C^*\sum_{i,j=1}^nA_{ij}$,
we find that $\|u(t)\|_{H^s}^2 \le (a/b)^2$ for $t\in[0,T]$. Here, we use
Assumption \eqref{1.small}. We deduce that $(d/dt)\|u\|_{H^s}^2\le 0$ and
consequently $\|u(t)\|_{H^s}\le \|u^0\|_{H^s}$ for $t\in[0,T]$.

Now, we take $u(T)$ as the initial datum for problem \eqref{1.uieta}. 
We deduce from Lemma \ref{lem.loc} 
the existence of a solution $u$ to \eqref{1.uieta} defined on $[T,T+T^*)$.
Here, $T^*>0$ can be chosen as the same end time as before since the norm 
of the initial datum $\|u(T)\|_{H^s}$ is not larger as $\|u^0\|_{H^s}$.
Note that $T^*$ becomes smaller only when the initial datum is larger in the
$H^s$ norm. Hence, $u(t)$
exists for $t\in[T,2T]$ and inequality \eqref{2.ineq} holds. As before, we conclude
from Lemma \ref{lem.sqrt} that $\|u(t)\|_{H^s}\le \|u^0\|_{H^s}$ for $t\in[T,2T]$. 
This argument can be continued, obtaining a global solution satisfying
$\|u(t)\|_{H^s}\le \|u^0\|_{H^s}$ for all $t>0$. 
Then, under the stronger assumption \eqref{1.small2},
$$
  \frac{d}{dt}\|u\|_{H^s}^2 \le
	-2\bigg(\sigma - C^*\sum_{i,j=1}^nA_{ij}\|u^0\|_{H^s}\bigg)\|\na u\|_{H^{s}}^2
	\le -\gamma\|\na u\|_{H^{s}}^2,
$$
which leads to \eqref{1.est}, finishing the proof.
\end{proof}

\begin{lemma}[Uniqueness of solutions]
Let assumption \eqref{1.small2} hold. 
Then the solution to problem \eqref{1.uieta} is unique
in the class of functions $u\in L^\infty(0,\infty;H^s(\R^d;\R^n))\cap
L^2(0,\infty;H^{s+1}(\R^d;$ $\R^n))$.
\end{lemma}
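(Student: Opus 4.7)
The plan is to run an energy estimate on the difference of two solutions and exploit the smallness assumption \eqref{1.small2} to absorb the critical term into the parabolic dissipation.

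Let $u_\eta$ and $\tilde u_\eta$ be two solutions to \eqref{1.uieta} with the same initial datum $u^0$, belonging to $L^\infty(0,\infty;H^s(\R^d;\R^n))\cap L^2(0,\infty;H^{s+1}(\R^d;\R^n))$, and set $z_i=u_{\eta,i}-\tilde u_{\eta,i}$. First I would write the equation for $z_i$, splitting the nonlinearity as
\[
  u_{\eta,i}\na V_{ij}^\eta* u_{\eta,j}-\tilde u_{\eta,i}\na V_{ij}^\eta*\tilde u_{\eta,j}
  = z_i\,\na V_{ij}^\eta* u_{\eta,j} + \tilde u_{\eta,i}\,\na V_{ij}^\eta* z_j.
\]
Multiplying the equation for $z_i$ by $z_i$, summing over $i$, and integrating over $\R^d$ yields, via the Cauchy--Schwarz inequality,
\[
  \tfrac12\tfrac{d}{dt}\|z\|_{L^2}^2+\sigma\|\na z\|_{L^2}^2
  \le\sum_{i,j=1}^n\Big(\|z_i\,\na V_{ij}^\eta* u_{\eta,j}\|_{L^2}+\|\tilde u_{\eta,i}\,\na V_{ij}^\eta* z_j\|_{L^2}\Big)\|\na z_i\|_{L^2}.
\]

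Next I would estimate the two pieces using Young's convolution inequality (Lemma \ref{lem.young}) together with the identity $\na V_{ij}^\eta* w=V_{ij}^\eta*\na w$: namely $\|\na V_{ij}^\eta* u_{\eta,j}\|_{L^\infty}\le A_{ij}\|\na u_{\eta,j}\|_{L^\infty}$ and $\|\na V_{ij}^\eta* z_j\|_{L^2}\le A_{ij}\|\na z_j\|_{L^2}$. Since $s>d/2+1$, the embedding $H^s(\R^d)\hookrightarrow W^{1,\infty}(\R^d)$ together with the a priori bound $\|u_\eta(t)\|_{H^s},\|\tilde u_\eta(t)\|_{H^s}\le\|u^0\|_{H^s}$ supplied by the previous lemma gives
\[
  \tfrac12\tfrac{d}{dt}\|z\|_{L^2}^2+\sigma\|\na z\|_{L^2}^2
  \le C^{**}\sum_{i,j=1}^n A_{ij}\|u^0\|_{H^s}\,\|\na z\|_{L^2}^2
  + C^{**}\sum_{i,j=1}^n A_{ij}\|u^0\|_{H^s}\,\|z\|_{L^2}\|\na z\|_{L^2},
\]
where $C^{**}$ depends only on $s,d,n$. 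I would absorb the mixed term via Young's inequality $ab\le\tfrac{\gamma}{4}a^2+C(\gamma)b^2$ applied to $\|z\|_{L^2}\|\na z\|_{L^2}$.

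The decisive step is that, under the stronger smallness condition \eqref{1.small2}, the coefficient of $\|\na z\|_{L^2}^2$ on the right-hand side is bounded by $\sigma-\gamma$ (after adjusting the constant $C^*$ in \eqref{1.small2} to dominate $C^{**}$), so the first term on the right can be absorbed into the $\sigma\|\na z\|_{L^2}^2$ on the left, leaving
\[
  \tfrac{d}{dt}\|z\|_{L^2}^2\le C(\gamma,u^0)\,\|z\|_{L^2}^2.
\]
Since $z(0)=0$, Gronwall's inequality forces $z\equiv0$ on any interval $[0,T]$, and since $T$ was arbitrary we conclude $u_\eta=\tilde u_\eta$ globally. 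The only place where real care is needed is checking that the constant produced by the Moser/Sobolev estimates can indeed be identified with the $C^*$ appearing in \eqref{1.small2}; this is not a serious obstacle but is the reason the same smallness threshold gives both the a priori bound and uniqueness.
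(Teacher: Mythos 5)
Your proof is correct and follows essentially the same route as the paper: the same splitting of the nonlinearity, the same $L^2$ energy estimate with Young's convolution inequality, absorption of the gradient term via the smallness condition \eqref{1.small2} (with the same caveat that $C^*$ must dominate the Sobolev/embedding constant), and Gronwall. The only cosmetic difference is that you bound the Gronwall coefficient using the time-uniform $H^s$ bound, whereas the paper uses $\|u(t)\|_{H^{s+1}}^2$, which is merely integrable in time; both suffice.
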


\begin{proof}
Let $u$ and $v$ be two solutions to \eqref{1.uieta} with the same initial data.
We multiply the difference of the equations satisfied by $u_i$ and $v_i$
by $u_i-v_i$, sum from $i=1,\ldots,n$, and integrate over $\R^d$.
Then, for all $0<t<T$ and some $T>0$,
\begin{align*}
  \frac12&\frac{d}{dt}\int_{\R^d}|u-v|^2 dx + \sigma\int_{\R^d}|\na(u-v)|^2 dx \\
	&\le \sum_{i,j=1}^n\big(\|u_i-v_i\|_{L^2}\|V_{ij}^\eta * u_j\|_{L^\infty}
	+ \|v_i\|_{L^\infty}\|V_{ij}^\eta * \na(u_j-v_j)\|_{L^2}\big)
	\|\na(u_i-v_i)\|_{L^2} \\
  &\le \sum_{i,j=1}^n\big(\|u-v\|_{L^2}\|V_{ij}^\eta\|_{L^1}\|\na u\|_{L^\infty}
	+ \|v\|_{L^\infty}\|V_{ij}^\eta\|_{L^1}\|\na(u-v)\|_{L^2}\big)\|\na(u-v)\|_{L^2} \\
	&\le \sum_{i,j=1}^nA_{ij}\big(\|u\|_{H^{s+1}}\|u-v\|_{L^2}\|\na(u-v)\|_{L^2}
	+ \|v\|_{H^s}\|\na(u-v)\|_{L^2}^2\big).
\end{align*}
By assumption, $\sum_{i,j=1}^nA_{ij}\|v\|_{H^s} \le \sigma-\gamma$
(since we supposed that $C^*\ge 1$). 
Thus, using the Cauchy-Schwarz inequality, it follows that
\begin{align*}
  \frac12&\frac{d}{dt}\int_{\R^d}|u-v|^2 dx + \sigma\int_{\R^d}|\na(u-v)|^2 dx \\
	&\le C(\eps)\bigg(\sum_{i,j=1}^n A_{ij}\|u\|_{H^{s+1}}\bigg)^2
	\|u-v\|_{L^2}^2 + \eps n^2\|\na(u-v)\|_{L^2}^2
	+ (\sigma-\gamma)\|\na(u-v)\|_{L^2}^2 \\
	&\le C(\eps)\|u(t)\|_{H^{s+1}}^2\|u-v\|_{L^2}^2 + \sigma\|\na(u-v)\|_{L^2}^2,
\end{align*}
if we choose $\eps\le \gamma/n^2$. Observe that the norm 
$\|u\|_{L^2(0,\infty;H^{s+1})}$ is bounded. This allows us to apply the Gronwall 
inequality, and together with the fact that $\|(u-v)(0)\|_{L^2}=0$, we infer that 
$\|(u-v)(t)\|_{L^2}=0$, concluding the proof.
\end{proof}


\section{Existence for the cross-diffusion system \eqref{1.cds}}\label{sec.ex2}

We prove Proposition \ref{prop.ex2} whose proof is split into two lemmas.

\begin{lemma}[Existence and uniqueness of solutions]
Let the assumptions of Proposition \ref{prop.ex2} hold. Then there exists
a unique solution to \eqref{1.cds} satisfying \eqref{1.estu}.
\end{lemma}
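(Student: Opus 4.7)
The plan is to construct the solution as the limit $\eta \to 0$ of the nonlocal solutions $u_\eta$ provided by Proposition \ref{prop.ex}. Since the smallness assumption \eqref{1.small2} is stronger than \eqref{1.small}, the uniform bound \eqref{1.est} is available with a constant independent of $\eta$, so the family $(u_\eta)_{\eta>0}$ is bounded in $L^\infty(0,\infty;H^s(\R^d;\R^n)) \cap L^2(0,\infty;H^{s+1}(\R^d;\R^n))$. First I would extract a subsequence that converges weakly-$*$ in the former space and weakly in the latter to some limit $u$, with $u_i \ge 0$ preserved in the limit.

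To pass to the limit in the nonlinear drift term I need strong compactness. From equation \eqref{1.uieta} together with the uniform estimate, $\pa_t u_\eta$ is bounded in $L^2(0,T;H^{s-1}(\R^d;\R^n))$ on every finite interval, so an Aubin--Lions argument applied on an exhausting sequence of balls $B_R\subset\R^d$ yields, up to a subsequence, strong convergence $u_\eta \to u$ in $L^2(0,T;H^s_{\mathrm{loc}})$ and in particular $\na u_\eta \to \na u$ strongly in $L^2(0,T;L^2_{\mathrm{loc}})$. For the nonlocal drift I would use the splitting
\[
  V_{ij}^\eta * \na u_{\eta,j} - a_{ij}\na u_j
  = V_{ij}^\eta * \na(u_{\eta,j} - u_j) + \bigl(V_{ij}^\eta * \na u_j - a_{ij}\na u_j\bigr).
\]
Young's convolution inequality controls the first term by $A_{ij}\|\na(u_{\eta,j}-u_j)\|_{L^2}$, while the second is a standard mollifier error acting on the fixed function $\na u_j \in L^2(0,T;H^s)$ and tends to zero in $L^2(0,T;L^2)$ as $\eta \to 0$. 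Passing to the limit in the weak formulation against any smooth test function of compact support recovers \eqref{1.cds}, and \eqref{1.estu} follows from the weak lower semicontinuity of the norms applied to the $\eta$-uniform bound \eqref{1.est}.

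For uniqueness I would perform an energy estimate directly on \eqref{1.cds}, in the spirit of the uniqueness lemma for the nonlocal system. Given two solutions $u,v$ with $u(0)=v(0)$, setting $w=u-v$, testing the difference equation with $w_i$, summing over $i$, and using $|a_{ij}|\le A_{ij}$ together with the embedding $H^s\hookrightarrow L^\infty$ yields
\[
  \tfrac{1}{2}\frac{d}{dt}\|w\|_{L^2}^2 + \sigma\|\na w\|_{L^2}^2
  \le C^*\sum_{i,j=1}^n A_{ij}\bigl(\|u\|_{H^{s+1}}\|w\|_{L^2} + \|v\|_{H^s}\|\na w\|_{L^2}\bigr)\|\na w\|_{L^2}.
\]
By \eqref{1.small2} the coefficient multiplying $\|\na w\|_{L^2}^2$ is at most $\sigma-\gamma$, and after a further Cauchy--Schwarz on the cross term (absorbing an $\eps\|\na w\|_{L^2}^2$ into the remaining $\gamma$) one obtains $\tfrac{d}{dt}\|w\|_{L^2}^2 \le C\|u\|_{H^{s+1}}^2\|w\|_{L^2}^2$. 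Since $\|u\|_{L^2(0,T;H^{s+1})}$ is finite by \eqref{1.estu}, Gronwall's inequality forces $w\equiv 0$.

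The hard part I anticipate is the identification of the limiting drift: because $V_{ij}^\eta$ is a \emph{signed} approximation of $a_{ij}\delta$, one cannot use the signed mass $a_{ij}$ in Young's inequality but must rely on the $L^1$ bound $A_{ij}$, and this has to be combined carefully with the local strong compactness of $u_\eta$ in order to identify the pointwise product $u_i\na u_j$ as the $\eta\to 0$ limit of $u_{\eta,i}\,(V_{ij}^\eta * \na u_{\eta,j})$ in the sense of distributions.
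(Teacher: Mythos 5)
Your proposal is correct and follows the same overall architecture as the paper: extract a weak limit from the $\eta$-uniform bound \eqref{1.est}, get local strong compactness via Aubin--Lions, identify the limit of the nonlocal drift, obtain \eqref{1.estu} by lower semicontinuity, and prove uniqueness by the energy/Gronwall argument (your uniqueness estimate is exactly the paper's). The one genuine divergence is in identifying the drift limit. The paper only needs the weak information: it bounds $\pa_t u_{\eta,i}$ in $L^2(0,T;H^{-1})$, uses Aubin--Lions with $H^1(B_R)\hookrightarrow\hookrightarrow L^2(B_R)$ to get $u_{\eta}\to u$ strongly in $L^2(0,T;L^2(B_R))$, and proves $V_{ij}^\eta * \na u_{\eta,j}\rightharpoonup a_{ij}\na u_j$ weakly in $L^2$ by a duality argument (moving the convolution onto the test function, where $V_{ij}^\eta*\psi\to a_{ij}\psi$ strongly); the product then converges weakly. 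You instead work one derivative higher: $\pa_t u_\eta$ bounded in $L^2(0,T;H^{s-1})$, Aubin--Lions between $H^{s+1}$ and $H^{s-1}$ giving \emph{strong} local convergence of $\na u_\eta$, and the splitting $V_{ij}^\eta*\na(u_{\eta,j}-u_j)+(V_{ij}^\eta*\na u_j-a_{ij}\na u_j)$. This is valid and arguably more direct, at the price of using the full $H^{s+1}$ regularity in the compactness step. One point to tighten: Young's inequality as you state it controls the first term by the \emph{global} norm $A_{ij}\|\na(u_{\eta,j}-u_j)\|_{L^2(\R^d)}$, whereas Aubin--Lions only gives strong convergence on balls; this is harmless because $\operatorname{supp}V_{ij}^\eta\subset B_\eta(0)$, so testing against $\psi$ supported in $B_R$ only requires $\na(u_{\eta,j}-u_j)\to 0$ in $L^2(B_{R+1})$ --- but the localization should be made explicit, since global strong convergence is not available. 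Your closing concern about the signed kernel is correctly resolved exactly as you suggest: the approximate-identity step for the fixed function $\na u_j$ uses only the uniform $L^1$ bound $A_{ij}$ and the fixed mass $a_{ij}$.
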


\begin{proof}
Let $u_\eta$ be the solution to \eqref{1.uieta}. We prove that a subsequence of
$(u_\eta)$ converges to a solution to problem \eqref{1.cds}. 
In view of the uniform estimate \eqref{1.est}, there exists a subsequence of
$(u_\eta)$, which is not relabeled, such that, as $\eta\to 0$,
\begin{equation}\label{3.u}
  u_\eta\rightharpoonup u\quad\mbox{weakly in }L^2(0,T;H^{s+1}(\R^d)).
\end{equation}
We show that $u$ is a weak solution to problem \eqref{1.cds}. First, we claim that
$$
  V_{ij}^\eta * \na u_{\eta,j} \rightharpoonup a_{ij}\na u_j
	\quad\mbox{weakly in }L^2(0,T;L^2(\R^d)).
$$
To prove this statement, we observe that $V_{ij}^\eta * \psi \to a_{ij}\psi$ 
strongly in $L^2(0,T;L^2(\R^d))$ for any $\phi\in L^2(0,T;L^2(\R^d))$ 
\cite[Theorem 9.10]{Rud87}
and $\na u_{\eta,j}\rightharpoonup \na u_j$ weakly in $L^2(0,T;L^2(\R^d))$. 
Therefore, for all $\psi\in C_0^\infty(\R^d\times(0,T);\R^n)$, as $\eta\to 0$,
\begin{align*}
  \bigg|\int_0^T&\int_{\R^d}\big(V_{ij}^\eta * \na u_{\eta,j} - a_{ij}\na u_j)
	\cdot\psi dxdt\bigg| \\
	&= \bigg|\int_0^T\int_{\R^d}\bigg(\int_{\R^d}V_{ij}^\eta(x-y)\na u_{\eta,j}(y,t)dy
	\bigg)\cdot\psi(x,t)dxdt \\
	&\phantom{xx}{}- \int_0^T\int_{\R^d}a_{ij}\na u_{j}(y,t)\cdot\psi(y,t)dydt\bigg| \\
	&\le \bigg|\int_0^T\int_{\R^d}\bigg(\int_{\R^d}V_{ij}^\eta(x-y)\psi(x,t)dx
	- a_{ij}\psi(y,t)\bigg)\cdot\na u_{\eta,j}(y,t)dydt\bigg| \\
	&\phantom{xx}{}+ |a_{ij}|\bigg|\int_0^T\int_{\R^d}(\na u_{\eta,j}-\na u_j)(y,t)\cdot
	\psi(y,t)dydt\bigg| \\
	&\le \|V_{ij}^\eta*\psi - a_{ij}\psi\|_{L^2(0,T;L^2(\R^d))}
	\|\na u_{\eta,j}\|_{L^2(0,T;L^2(\R^d))} \\
	&\phantom{xx}{}+ |a_{ij}|\bigg|\int_0^T\int_{\R^d}(\na u_{\eta,j}-\na u_j)(y,t)\cdot
	\psi(y,t)dydt\bigg| \to 0,
\end{align*}
which proves the claim. Estimate \eqref{1.est} and the embedding
$H^s(\R^d)\hookrightarrow L^\infty(\R^d)$ show that
\begin{equation}\label{3.aux}
  \big\|u_{\eta,i}V_{ij}^\eta * \na u_{\eta,j}\big\|_{L^2(0,T;L^2)} 
	\le \|u_{\eta,i}\|_{L^\infty(0,T;L^\infty)}\|V_{ij}^\eta\|_{L^1}
	\|\na u_{\eta,j}\|_{L^2(0,T;L^2)} \le C
\end{equation}
and consequently, for any $T>0$,
$$
  \|\pa_t u_{\eta,i}\|_{L^2(0,T;H^{-1})}
  \le \sigma_i\|\na u_{\eta,i}\|_{L^2(0,T;L^2)}
	+ \sum_{j=1}^n\big\|u_{\eta,i}V_{ij}^\eta * \na u_{\eta,j}\big\|_{L^2(0,T;L^2)} 
	\le C.
$$
The weak formulation of \eqref{1.uieta} reads as
\begin{equation}\label{3.weak}
  \int_0^T\langle\pa_t u_{\eta,i},\phi\rangle\zeta(t)dt
	= \int_0^T\int_{\R^d}\bigg(\sigma_i\na u_{\eta,i} + \sum_{j=1}^n u_{\eta,i}
	\na V_{ij}^\eta * \na u_{\eta,j}\bigg)\cdot\na\phi dx\zeta(t)dt,
\end{equation}
where $\phi\in C_0^\infty(\R^d)$ with $\operatorname{supp}(\phi)\subset B_R(0)$
and $\zeta\in C^\infty([0,T])$. Since the ball $B_R(0)$ is bounded and
the embedding $H^1(B_R(0))\hookrightarrow L^2(B_R(0))$ is compact, the
Aubin-Lions lemma \cite{Sim87} gives the existence of a subsequence of
$(u_\eta)$, which is not relabeled, such that $u_\eta\to u$ strongly in
$L^2(0,T;L^2(B_R(0)))$ as $\eta\to 0$, and the limit coincides with the
weak limit in \eqref{3.u}. We deduce that
$$
  u_{\eta,i}V_{ij}^\eta * \na u_{\eta,j} \rightharpoonup a_{ij}u_i\na u_j
	\quad\mbox{weakly in }L^1(0,T;L^1(B_R(0))).
$$
Estimate \eqref{3.aux} shows that this convergence even holds in 
$L^2(0,T;L^2(B_R(0)))$. We can perform the limit in \eqref{3.weak}, which shows
that the limit $u$ is a solution to the cross-diffusion problem \eqref{1.cds}. The
uniform estimates \eqref{1.estu} follow from \eqref{1.est} using the
lower semicontinuity of the norm. 

Next, we show the uniqueness of solutions.
Let $u$ and $v$ be two solutions to \eqref{1.cds} with the same initial data. 
Taking the difference of the
equations satisfied by $u_i$ and $v_i$, multiplying the resulting equation by
$u_i-v_i$, summing from $i=1,\ldots,n$, integrating over $\R^d$, and using the
Cauchy-Schwarz inequality leads to
\begin{align*}
  \frac12&\frac{d}{dt}\int_{\R^d}|u-v|^2 dx + \sigma\int_{\R^d}|\na(u-v)|^2 dx \\
  &\le \sum_{i,j=1}^n A_{ij}\|u\|_{H^{s+1}}\|u-v\|_{L^2}\|\na(u-v)\|_{L^2}
	+ \sum_{i,j}A_{ij}\|v\|_{H^s}\|u-v\|_{L^2}^2.
\end{align*}
In view of estimate \eqref{1.estu}, this becomes
$$
  \frac12\frac{d}{dt}\int_{\R^d}|u-v|^2 dx + \frac{\sigma}{2}\int_{\R^d}|\na(u-v)|^2 dx
  \le C(t)\|u-v\|_{L^2}^2,
$$
and the constant $C(t)>0$ is integrable (as it depends on $\|u(t)\|_{H^{s+1}}$).
Gronwall's inequality then implies that $(u-v)(t)=0$ for $t>0$.
\end{proof}

\begin{lemma}[Error estimate]
Let the assumptions of Proposition \ref{prop.ex2} hold. Let $u$ be
the solution to \eqref{1.cds} and $u_\eta$ be the solution to \eqref{1.uieta}.
Then the error estimate \eqref{1.error} holds.
\end{lemma}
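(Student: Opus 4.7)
The plan is to establish \eqref{1.error} by a standard $L^2$ energy estimate for the difference $e := u_\eta - u$, exploiting the uniform bounds \eqref{1.est}--\eqref{1.estu} already at hand and isolating the $\eta$-dependence in a single mollification-error term. Subtracting \eqref{1.cds} from \eqref{1.uieta}, testing the $i$-th equation with $e_i$, summing over $i$, integrating over $\R^d$, and integrating by parts gives
\begin{equation*}
\frac12\frac{d}{dt}\|e\|_{L^2}^2 + \sigma\|\na e\|_{L^2}^2
\le \sum_{i,j=1}^n\int_{\R^d}\big(u_{\eta,i}\na V_{ij}^\eta * u_{\eta,j} - a_{ij}u_i\na u_j\big)\cdot\na e_i\,dx.
\end{equation*}

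The next step is to decompose the integrand by adding and subtracting, and transferring the gradient from the kernel to its argument via $\na V_{ij}^\eta*\phi = V_{ij}^\eta*\na\phi$, as
\begin{equation*}
u_{\eta,i}\na V_{ij}^\eta * u_{\eta,j} - a_{ij}u_i\na u_j
= e_i\,(\na V_{ij}^\eta * u_{\eta,j}) + u_i\,(V_{ij}^\eta * \na e_j) + u_i\,(V_{ij}^\eta * \na u_j - a_{ij}\na u_j).
\end{equation*}
Young's convolution inequality and the embedding $H^s\hookrightarrow W^{1,\infty}$ together with the uniform bound \eqref{1.est} control the first summand by $C\|e\|_{L^2}\|\na e\|_{L^2}\le C(\eps)\|e\|_{L^2}^2 + \eps\|\na e\|_{L^2}^2$. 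For the second summand, $\|u_i\|_{L^\infty}\|V_{ij}^\eta*\na e_j\|_{L^2}\|\na e_i\|_{L^2}\le C^*\|u\|_{H^s}A_{ij}\|\na e\|_{L^2}^2$; summing in $i,j$ and invoking \eqref{1.small2} together with \eqref{1.estu} bounds the total contribution by $(\sigma-\gamma)\|\na e\|_{L^2}^2$, which is absorbed by the dissipation on the left.

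The crucial step is the third summand, where the factor $\eta$ enters. Writing
\begin{equation*}
V_{ij}^\eta * \na u_j(x) - a_{ij}\na u_j(x) = \int_{\R^d}V_{ij}^\eta(y)\big[\na u_j(x-y) - \na u_j(x)\big]dy,
\end{equation*}
Minkowski's inequality with the translation bound $\|f(\cdot-y) - f(\cdot)\|_{L^2}\le|y|\|\na f\|_{L^2}$ and the scaling identity $\int|V_{ij}^\eta(y)|\,|y|\,dy = \eta\int|V_{ij}(|z|)|\,|z|\,dz\le C\eta$ yield
\begin{equation*}
\|V_{ij}^\eta * \na u_j - a_{ij}\na u_j\|_{L^2}\le C\eta\,\|D^2 u_j\|_{L^2}.
\end{equation*}
Multiplying by $\|u_i\|_{L^\infty}\le C\|u\|_{H^s}$ and $\|\na e\|_{L^2}$ and applying Young's inequality produces the contribution $C\eta^2\|u\|_{H^{s+1}}^2 + \eps\|\na e\|_{L^2}^2$.

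Gathering everything, choosing $\eps$ small enough to absorb all $\eps\|\na e\|_{L^2}^2$ contributions into the dissipation, one arrives at
\begin{equation*}
\frac{d}{dt}\|e\|_{L^2}^2 + \frac{\gamma}{2}\|\na e\|_{L^2}^2 \le K\|e\|_{L^2}^2 + C\eta^2\|u\|_{H^{s+1}}^2,
\end{equation*}
where $K$ depends on the uniform $H^s$-bounds of $u$ and $u_\eta$. Since $e(0)=0$ and $\|u\|_{L^2(0,T;H^{s+1})}<\infty$ by \eqref{1.estu}, Gronwall's inequality delivers $\sup_{(0,T)}\|e(t)\|_{L^2}^2\le C(T)\eta^2$, and integration of the inequality over $(0,T)$ gives $\|\na e\|_{L^2(0,T;L^2)}^2\le C(T)\eta^2$, which is \eqref{1.error}. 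The main delicate point is the non-dissipative cross term $u_i(V_{ij}^\eta*\na e_j)$, which carries a genuine gradient of the error: it can be absorbed into the Laplacian dissipation precisely thanks to the smallness hypothesis \eqref{1.small2}, exactly as in the a priori argument of Proposition \ref{prop.ex}.
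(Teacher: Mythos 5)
Your proof is correct and follows essentially the same route as the paper: an $L^2$ energy estimate for $u_\eta-u$ with a three-term add-and-subtract splitting, a first-order mollification bound $\|V_{ij}^\eta*\na\phi-a_{ij}\na\phi\|_{L^2}\le C\eta\|D^2\phi\|_{L^2}$, absorption of the gradient-of-error cross term into the dissipation via \eqref{1.small2}, and Gronwall. The only (immaterial) difference is that you put the mollification error on the limit $u_j$ and keep the convolution in the cross term, whereas the paper puts it on $u_{\eta,j}$ and uses the constant $a_{ij}$ in the cross term; both variants are estimated identically.
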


\begin{proof}
We take the difference of equations \eqref{1.uieta} and \eqref{1.cds},
\begin{align*}
  \pa_t&(u_{\eta,i}-u_i) - \sigma_i\Delta(u_{\eta,i}-u_i)
	= \diver\bigg(\sum_{j=1}^n u_{\eta,i}V_{ij}^\eta * \na u_{\eta,j}
	- \sum_{i=1}^n a_{ij}u_i\na u_j\bigg) \\
	&= \diver\sum_{j=1}^n\Big((u_{\eta,i}-u_i)V_{ij}^\eta * \na u_{\eta,j}
	+ u_i(V_{ij}^\eta * \na u_{\eta,j}	- a_{ij}\na u_{\eta,j}) 
	+ a_{ij}u_i\na(u_{\eta,j}-u_j)\Big).
\end{align*}
Multiplying this equation by $u_{\eta,i}-u_i$, summing from $i=1,\ldots,n$, 
integrating over $\R^d$, using the Cauchy-Schwarz inequality, and the estimate
$|a_{ij}|\le A_{ij}$, we find that
\begin{align*}
  \frac12\frac{d}{dt}&\int_{\R^d}|u_\eta-u|^2 dx 
	+ \sigma\int_{\R^d}|\na(u_\eta-u)|^2 dx \\
	&\le \sum_{i,j=1}^n\|u_{\eta,i}-u_i\|_{L^2}\|V_{ij}^\eta 
	* \na u_{\eta,j}\|_{L^\infty}\|\na(u_{\eta,i}-u_i)\|_{L^2} \\
	&\phantom{xx}{}+ \sum_{i,j=1}^n \|u_i\|_{L^\infty}\|V_{ij}^\eta * \na u_{\eta,j}
	- a_{ij}\na u_{\eta,j}\|_{L^2}\|\na(u_{\eta,j}-u_j)\|_{L^2} \\
	&\phantom{xx}{}+ \sum_{i,j=1}^n A_{ij}\|u_i\|_{L^\infty}
	\|\na(u_{\eta,j}-u_j)\|_{L^2}\|\na(u_{\eta,i}-u_i)\|_{L^2} \\
	&= I_ 1 + I_2 + I_3.
\end{align*}

We estimate the right-hand side term by term. First, by the continuous embedding
$H^{s}(\R^d)\hookrightarrow L^\infty(\R^d)$,
\begin{align*}
  |I_1| &\le \sum_{i,j=1}^n\|u_{\eta,i}-u_i\|_{L^2}\|V_{ij}^\eta\|_{L^1}
	\|\na u_{\eta,j}\|_{L^\infty}\|\na(u_{\eta,i}-u_i)\|_{L^2} \\
  &\le C(\gamma)\bigg(\sum_{i,j=1}^n A_{ij}\|\na u_{\eta}\|_{H^{s}}\bigg)^2
	\|u_\eta-u\|_{L^2}^2 + \frac{\gamma}{4}\|\na(u_\eta-u)\|_{L^2}^2.
\end{align*}
To estimate $I_2$, let $g\in L^2(\R^d;\R^n)$. 
Since $\operatorname{supp}V_{ij}^\eta\subset B_\eta(0)$, the mean-value theorem
shows that
\begin{align*}
  \bigg|\int_{\R^d}&\big(V_{ij}^\eta * \na u_{\eta,j} - a_{ij}\na u_{\eta,j}\big)(x)
	\cdot g(x)dx\bigg| \\
	&= \bigg|\int_{\R^d}\int_{B_\eta(0)} V_{ij}^\eta(y)\sum_{k=1}^d
	\bigg(\frac{\pa u_{\eta,j}}{\pa x_k}(x-y)	- \frac{\pa u_{\eta,j}}{\pa x_k}(x)\bigg)
	g_k(x) dydx\bigg| \\
	&= \bigg|\int_{\R^d}\int_{B_\eta(0)} V_{ij}^\eta(y)\bigg(\int_0^1
	\sum_{k,\ell=1}^d\frac{\pa^2 u_{\eta,j}}{\pa x_k\pa x_\ell}(x-ry)y_\ell dr\bigg)g_k(x)
	dydx\bigg| \\
	&\le \eta\int_0^1\int_{B_\eta(0)} |V_{ij}^\eta(y)|\int_{\R^d}
	|D^2 u_{\eta,j}(x-ry)|\,|g(x)|dxdydr \\
	&\le \eta\int_0^1\int_{B_\eta(0)}|V_{ij}^\eta(y)|\,
	\|D^2 u_{\eta,j}(\cdot-ry)\|_{L^2}\|g\|_{L^2}dydr \\
	&\le \eta\|V_{ij}^\eta\|_{L^1}\|D^2 u_{\eta,j}\|_{L^2}\|g\|_{L^2}
	= \eta A_{ij}\|D^2 u_{\eta,j}\|_{L^2}\|g\|_{L^2}.
\end{align*}	
This shows that
\begin{equation}\label{3.diff}
  \|V_{ij}^\eta * \na u_{\eta,j} - a_{ij}\na u_{\eta,j}\|_{L^2} 
	\le \eta C\|D^2 u_{\eta,j}\|_{L^2} \le \eta C
\end{equation}
and consequently,
\begin{align*}
  |I_2| &\le C(\gamma)\sum_{i,j=1}^n\|V_{ij}^\eta * \na u_{\eta,j}
	- a_{ij}\na u_{\eta,j}\|_{L^2}^2
	+ \frac{\gamma}{4}\|\na(u_\eta-u)\|_{L^2}^2 \\
	&\le C(\gamma)\eta^2 + \frac{\gamma}{4}\|\na(u_\eta-u)\|_{L^2}^2.
\end{align*}
Finally, by Assumption \eqref{1.small2},
$$
  |I_3| \le \bigg(\sum_{i,j=1}^nA_{ij}\|u\|_{H^{s+1}}\bigg)
	\|\na(u_\eta-u)\|_{L^2}^2 
	\le (\sigma-\gamma)\|\na(u_\eta-u)\|_{L^2}^2.
$$
Therefore,
\begin{align*}
  \frac12\frac{d}{dt}&\int_{\R^d}|u_\eta-u|^2 dx 
	+ \frac{\gamma}{2}\int_{\R^d}|\na(u_\eta-u)|^2 dx \\
	&\le C(\gamma)\bigg(\sum_{i,j=1}^nA_{ij}\|\na u_{\eta}\|_{H^{s}}\bigg)^2
	\|u_\eta-u\|_{L^2}^2 + C(\gamma)\eta^2 \\
	&\le C(\gamma)(\sigma-\gamma)^2\|u_\eta-u\|_{L^2}^2 + C(\gamma)\eta^2,
\end{align*}
and Gronwall's lemma gives the conclusion.
\end{proof}


\section{Existence of solutions to the stochastic systems}\label{sec.stoch}

We prove the solvability of the stochastic ordinary differential systems
\eqref{1.Xi}, \eqref{1.barXi}, and \eqref{1.hatXi}. 

\begin{lemma}[Solvability of the stochastic many-particle system]\label{lem.many}
For any fixed $\eta>0$, problem \eqref{1.Xi} has a pathwise unique strong solution
$X_{\eta,i}^{k,N}$ that is $\F_t$-adapted.
\end{lemma}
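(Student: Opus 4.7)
The plan is to recognize \eqref{1.Xi} as a standard finite-dimensional SDE with globally Lipschitz coefficients and apply the classical existence/uniqueness theorem for strong solutions.

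First I would stack the $nN$ processes into a single vector-valued process $\mathbf{X}(t) = (X_{\eta,i}^{k,N}(t))_{i,k} \in \R^{dnN}$ and assemble the independent Brownian motions $W_i^k$ into a $dnN$-dimensional Brownian motion $\mathbf{W}$. Then \eqref{1.Xi} takes the form
\begin{equation*}
  d\mathbf{X}(t) = b(\mathbf{X}(t))\,dt + \Sigma\,d\mathbf{W}(t), \qquad \mathbf{X}(0) = (\xi_i^k)_{i,k},
\end{equation*}
where $\Sigma$ is a constant diagonal matrix with entries $\sqrt{2\sigma_i}$, and the drift $b:\R^{dnN}\to\R^{dnN}$ has components
\begin{equation*}
  b_{i,k}(\mathbf{x}) = -\sum_{j=1}^n \frac{1}{N}\sum_{\ell=1}^N \na V_{ij}^\eta(x_{i,k}-x_{j,\ell}).
\end{equation*}

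Next I would verify that $b$ is globally Lipschitz (for fixed $\eta>0$). Since $V_{ij}\in C_0^2(\R^d)$, the rescaled potentials satisfy $\|\na V_{ij}^\eta\|_{L^\infty}\le C\eta^{-d-1}$ and $\|D^2 V_{ij}^\eta\|_{L^\infty}\le C\eta^{-d-2}$; in particular $\na V_{ij}^\eta$ is bounded and Lipschitz. Since $b$ is a finite sum of such Lipschitz functions composed with the linear maps $\mathbf{x}\mapsto x_{i,k}-x_{j,\ell}$, one obtains
\begin{equation*}
  |b(\mathbf{x})-b(\mathbf{y})| \le L(\eta,n,N)\,|\mathbf{x}-\mathbf{y}|, \qquad |b(\mathbf{x})| \le K(\eta,n),
\end{equation*}
so both the Lipschitz and linear-growth conditions are satisfied. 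The diffusion coefficient $\Sigma$ is constant, hence trivially Lipschitz. Moreover, the initial data $\xi_i^k$ are $\F_0$-measurable with finite moments (they have density $u_i^0\in H^s\hookrightarrow L^\infty$ of compact-free but square-integrable nature — more precisely, $\E|\xi_i^k|^2$ may be assumed finite under standard additional hypotheses, but since $b$ is bounded we can in any case dispense with moment assumptions on the initial data in the pathwise construction).

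At this point I would invoke the classical theorem on strong existence and pathwise uniqueness for SDEs with globally Lipschitz drift and diffusion (see e.g.\ Karatzas--Shreve, Theorem 5.2.9, or Protter, Chapter V). This yields a pathwise unique strong solution $\mathbf{X}(t)$ that is continuous and adapted to the filtration $(\F_t)_{t\ge 0}$ generated by the Brownian motions and the initial data. Unpacking the components gives the desired $\F_t$-adapted processes $X_{\eta,i}^{k,N}$. No real obstacle is expected here; the only point to flag is that the Lipschitz constant $L$ blows up as $\eta\to 0$, which is harmless at this stage since $\eta>0$ is fixed, but will matter when the joint limit $N\to\infty$, $\eta\to 0$ is performed in Section \ref{sec.proof}.
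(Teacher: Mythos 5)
Your proposal is correct and follows essentially the same route as the paper: the authors likewise observe that $\na V_{ij}^\eta$ is bounded and Lipschitz for fixed $\eta>0$ and then invoke the classical strong existence and pathwise uniqueness theorem for globally Lipschitz SDEs (citing \cite[Theorem 5.2.1]{Oek00} and \cite[Theorem 3.1.1]{PrRo07} rather than Karatzas--Shreve). Your extra remarks on the vectorized formulation, the blow-up of the Lipschitz constant as $\eta\to 0$, and the moment assumptions on the initial data are sensible elaborations of the same argument.
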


\begin{proof}
By assumption, the gradient $\na V_{ij}^\eta$ is bounded and Lipschitz
continuous. Then \cite[Theorem 5.2.1]{Oek00} or \cite[Theorem 3.1.1]{PrRo07} show that
there exists a (up to $\mathbb{P}$-indistinguishability) 
pathwise unique strong solution to \eqref{1.Xi}. 
\end{proof}

\begin{lemma}[Solvability of the nonlocal stochastic system]\label{lem.nonloc}
Let $u_\eta$ be a solution to the nonlocal diffusion system \eqref{1.uieta}
satisfying $|\na u_\eta|\in L^\infty(0,\infty;W^{1,\infty}(\R^d;\R^n))$.
Then problem \eqref{1.barXi} has a pathwise unique strong solution $\bar X_{\eta,i}^k$
with probability density function $u_{\eta,i}$.
\end{lemma}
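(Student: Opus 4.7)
The plan is to proceed in two steps: first I would establish existence and pathwise uniqueness for the SDE \eqref{1.barXi} by verifying the hypotheses of a classical strong existence theorem, and then I would identify the one–dimensional marginals with $u_{\eta,i}(\cdot,t)$ by means of the Kolmogorov forward equation and a uniqueness argument for a linear parabolic problem.

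For the first step, I fix $u_\eta$ and set $b_i(x,t):=-\sum_{j=1}^n (\na V_{ij}^\eta * u_{\eta,j})(x,t)$. The diffusion coefficient $\sqrt{2\sigma_i}$ is constant, so the only task is to show that $b_i$ is bounded and globally Lipschitz in $x$, uniformly in $t$. Rewriting $\na V_{ij}^\eta * u_{\eta,j} = V_{ij}^\eta * \na u_{\eta,j}$ by integration by parts and applying Young's convolution inequality, I obtain
\begin{equation*}
  \|b_i(\cdot,t)\|_{L^\infty}\le \sum_{j=1}^n A_{ij}\|\na u_{\eta,j}(\cdot,t)\|_{L^\infty},\qquad \|\na_x b_i(\cdot,t)\|_{L^\infty}\le \sum_{j=1}^n A_{ij}\|D^2 u_{\eta,j}(\cdot,t)\|_{L^\infty},
\end{equation*}
both of which are finite and uniform in $t$ by the hypothesis $|\na u_\eta|\in L^\infty(0,\infty;W^{1,\infty}(\R^d;\R^n))$. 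Standard SDE theory for Lipschitz drifts, as invoked in Lemma \ref{lem.many}, then yields an $\F_t$-adapted pathwise unique strong solution $\bar X_{\eta,i}^k$ to \eqref{1.barXi}.

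For the second step, let $p_i(\cdot,t)$ denote the law of $\bar X_{\eta,i}^k(t)$; since $\xi_i^k$ has density $u_i^0$, we have $p_i(\cdot,0)=u_i^0\,dx$. Applying It\^o's formula to $\varphi(\bar X_{\eta,i}^k(t))$ for arbitrary $\varphi\in C_0^\infty(\R^d)$ and taking expectations (the stochastic integral is a true martingale because $\na\varphi$ is bounded and compactly supported), I obtain the weak form
\begin{equation*}
  \int_{\R^d}\varphi\, dp_i(t) - \int_{\R^d}\varphi\, u_i^0\, dx = \int_0^t\!\!\int_{\R^d}\bigl(\sigma_i\Delta\varphi + b_i(\cdot,s)\cdot\na\varphi\bigr)\, dp_i(s)\, ds,
\end{equation*}
which is precisely the distributional formulation of the linear Fokker--Planck equation
\begin{equation*}
  \pa_t p_i = \sigma_i\Delta p_i + \diver\bigg(p_i\sum_{j=1}^n\na V_{ij}^\eta * u_{\eta,j}\bigg),\qquad p_i(\cdot,0)=u_i^0.
\end{equation*}
Since $u_\eta$ is assumed to solve \eqref{1.uieta}, the function $u_{\eta,i}$ is another solution to this same linear equation (with $u_{\eta,j}$ in the convolution now viewed as a fixed, smooth, bounded coefficient) with the same initial datum. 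Uniqueness for this linear parabolic problem with bounded Lipschitz drift finishes the identification $p_i(\cdot,t)=u_{\eta,i}(\cdot,t)$.

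The main obstacle is this last uniqueness step, since $p_i$ is initially only a measure-valued solution. I would resolve this either by appealing to classical smoothing results for nondegenerate parabolic equations with smooth bounded coefficients (which promote $p_i(\cdot,t)$ to a smooth positive density for $t>0$), or, more self-containedly, by running the same $L^2$ energy estimate on $u_{\eta,i}-p_i$ that was used in the uniqueness proof for \eqref{1.uieta} after verifying that the difference lies in the appropriate function space. The rest of the argument is routine given the regularity hypothesis on $u_\eta$.
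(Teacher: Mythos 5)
Your proposal is correct and follows essentially the same route as the paper: Lipschitz/boundedness of the frozen drift via Young's inequality to get a pathwise unique strong solution, then It\^o's formula plus expectation to show the law solves the linear Fokker--Planck equation, identified with $u_{\eta,i}$ by uniqueness. Your explicit attention to the measure-valued-solution issue in the final uniqueness step is a point the paper passes over more quickly (it simply invokes the unique solvability of \eqref{1.uieta}), but it does not change the argument.
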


\begin{remark}\label{rem}\rm
We have shown in Proposition \ref{prop.ex2} that if $u^0\in H^s(\R^d;\R^n)$ and the 
smallness condition \eqref{1.small} holds, there exists a unique
solution $u_\eta\in L^\infty(0,\infty;H^s(\R^d;\R^n))$. 
The regularity for $u_\eta$, required in Lemma \ref{lem.nonloc}, is fulfilled 
if $s>d/2+2$, thanks to the embedding $u_{\eta,i}\in L^\infty(0,\infty;H^s(\R^d))
\hookrightarrow L^\infty(0,\infty;W^{2,\infty}(\R^d))$ for $i=1,\ldots,n$.
\qed
\end{remark}

\begin{proof}[Proof of Lemma \ref{lem.nonloc}]
We proceed as in the proof of Lemma 3.2 of \cite{CGK18}.
Let $v$ be a solution to \eqref{1.uieta} satisfying $v(0)=u^0$ in $\R^d$,
where $u^0_i$ is the density of $\xi^k_i$.
By assumption, $\na V_{ij}^\eta * v_j = V_{ij}^\eta * \na v_j$
is bounded and Lipschitz continuous. Therefore, 
$$
  d\bar X_{\eta,i}^k = -\sum_{j=1}^n(\na V_{ij}^\eta * v_j)(\bar X_{\eta,i}(t),t)dt
	+ \sqrt{2\sigma_i}dW_i^k(t), \quad \bar X_{\eta,i}^k(0)=\xi_i^k,
$$
has a pathwise unique strong solution $\bar X_{\eta,i}^k$. Let $u_{\eta,i}$
be the probability density function of $\bar X_{\eta,i}^k$ and let $\phi_i$ be
a smooth test function. Then It\^o's lemma implies that
\begin{align*}
  \phi_i(&\bar X_{\eta,i}^k(t),t) - \phi_i(\xi_i^k,0) \\
	&= \int_0^t\pa_s\phi_i(\bar X_{\eta,i}^k(s),s)ds
	- \sum_{j=1}^n\int_0^t(V_{ij}^\eta * \na v_j)(\bar X_{\eta,i}^k(s),s)
	\cdot\na \phi_i(\bar X_{\eta,i}^k(s),s)ds \\
	&\phantom{xx}{}+ \sigma_i\int_0^t\Delta\phi_i(\bar X_{\eta,i}^k(s),s)ds 
	+ \sqrt{2\sigma_i}\int_0^t\na\phi_i(\bar X_{\eta,i}^k(s),s)dW_i^k(s).
\end{align*}
Applying the expectation
$$
  \E\big(\phi_i(\bar X_{\eta,i}^k(t),t)\big) 
	= \int_{\R^d}\phi_i(x,t)u_{\eta,i}(x,t)dx
$$
to the previous expression yields
\begin{align*}
  \int_{\R^d}\phi_i(x,t)u_{\eta,i}(x,t)dx
	&= \int_{\R^d}\phi_i(x,0)u_{\eta,i}(x,0)dx \\
	&\phantom{xx}{}
	+ \int_0^t\int_{\R^d}\big(\pa_s\phi_i(x,s) + \sigma_i\Delta\phi_i(x,s)\big)
	u_{i,\eta}(x,s)dxds \\
	&\phantom{xx}{}
	- \sum_{j=1}^n\int_0^t\int_{\R^d}\na\phi_i(x,s)\cdot(V_{ij}^\eta * \na v_j)(x,s)
	u_{\eta,i}(x,s)dxds.
\end{align*}
This is the weak formulation of 
$$
  \pa_t u_{\eta,i} - \sigma_i\Delta u_{\eta,i}
	= \diver\bigg(\sum_{j=1}^n u_{\eta,i}V_{ij}^\eta * \na v_j\bigg),
	\quad u_{\eta,i}(0)=u^0.
$$
The unique solvability of problem \eqref{1.uieta} implies that the
solution is $u_{\eta,i}$, and we obtain $v=u_\eta$. This finishes the proof.
\end{proof}

By the same technique, the solvability of the limiting stochastic system
can be proved.

\begin{lemma}[Solvability of the limiting stochastic system]\label{lem.limit}
Let $u$ be the unique solution to problem \eqref{1.cds} satisfying
$|\na u|\in L^\infty(0,\infty;W^{1,\infty}(\R^d;\R^n))$. Then there exists
a pathwise unique strong solution $\widehat X_i^k$ with probability density
function $u_i$.
\end{lemma}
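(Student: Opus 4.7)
The plan is to follow the proof of Lemma \ref{lem.nonloc} \emph{mutatis mutandis}, with the nonlocal drift $\sum_j \na V_{ij}^\eta * u_{\eta,j}$ replaced by the local drift $\sum_j a_{ij}\na u_j$. By the standing hypothesis, $\na u \in L^\infty(0,\infty;W^{1,\infty}(\R^d;\R^n))$, so the drift coefficient
$$
  b_i(x,t) := -\sum_{j=1}^n a_{ij}\na u_j(x,t)
$$
is globally bounded and globally Lipschitz in $x$, uniformly in $t\ge 0$, while the diffusion coefficient $\sqrt{2\sigma_i}$ is a positive constant. The classical theory of SDEs with Lipschitz coefficients (for instance Theorem 5.2.1 in \cite{Oek00} or Theorem 3.1.1 in \cite{PrRo07}) therefore produces a pathwise unique, $\F_t$-adapted strong solution $\widehat X_i^k$ to \eqref{1.hatXi} for every $i=1,\ldots,n$ and $k=1,\ldots,N$.

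The second step is to identify the marginal law of $\widehat X_i^k(t)$ with $u_i(\cdot,t)$. Let $v_i(\cdot,t)$ denote the probability density of $\widehat X_i^k(t)$; since the noise is additive and non-degenerate and the initial law $u_i^0$ admits a density, so does $\widehat X_i^k(t)$ for every $t>0$. For $\phi_i \in C_0^\infty(\R^d\times[0,T))$, I would apply It\^o's formula to $\phi_i(\widehat X_i^k(t),t)$, take expectations, and exploit the identity $\E(\phi_i(\widehat X_i^k(t),t)) = \int_{\R^d}\phi_i(x,t)v_i(x,t)dx$ to arrive, exactly as in the last lines of the proof of Lemma \ref{lem.nonloc}, at the weak formulation
$$
  \pa_t v_i - \sigma_i\Delta v_i = \diver\bigg(\sum_{j=1}^n a_{ij}v_i\na u_j\bigg),
  \quad v_i(0) = u_i^0.
$$

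Finally, $u_i$ itself solves this equation, which is precisely \eqref{1.cds}. Viewed as a linear Cauchy problem in the unknown $v$ with bounded Lipschitz coefficients $a_{ij}\na u_j$, it admits at most one weak solution in the natural energy class; this follows from an $L^2$ difference estimate that parallels the uniqueness arguments of Section \ref{sec.ex}. Hence $v_i = u_i$, as claimed. The only subtle point is to verify that the density $v_i$ has enough regularity to sit in the uniqueness class of this linear equation; I expect this to follow from standard parabolic regularity of the associated Fokker--Planck equation together with the smoothness of the drift, and it is the single step that requires genuine verification beyond imitating Lemma \ref{lem.nonloc}.
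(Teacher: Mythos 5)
Your proposal is correct and is essentially the paper's own argument: the paper proves this lemma simply by remarking that it follows ``by the same technique'' as Lemma \ref{lem.nonloc}, namely solving the SDE with the frozen Lipschitz drift, applying It\^o's formula to identify the law of the process as a weak solution of the associated linear Fokker--Planck equation, and concluding by uniqueness that this law coincides with $u_i$. The regularity caveat you flag at the end is real but is glossed over in the paper's treatment of Lemma \ref{lem.nonloc} as well, so your write-up is, if anything, slightly more careful.
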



\section{Proof of Theorem \ref{thm.main}}\label{sec.proof}

First, we show an estimate for the difference $X_{\eta,i}^{k,N}-\bar X_{\eta,i}^k$.

\begin{lemma}\label{lem.error1}
Let the assumptions of Theorem \ref{thm.main} hold. Then, for any $t>0$,
$$
  \E\bigg(\sum_{i=1}^n\sup_{0<s<t}\sup_{k=1,\ldots,N}
	\big|X_{\eta,i}^{k,N}(s) - \bar X_{\eta,i}^k(s)\big|^2\bigg) 
	\le \frac{C(t)}{N^{1-C(t)\eps}},
$$
where the constant $C(t)$ depends on $t$, $n$, $\|D^2 V_{ij}\|_{L^\infty}$,
and the initial datum $u^0$.
\end{lemma}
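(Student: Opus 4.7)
The plan is to follow the standard coupling strategy: exploit the fact that the many-particle system \eqref{1.Xi} and the intermediate system \eqref{1.barXi} share the initial datum $\xi_i^k$ and are driven by the same Brownian motion $W_i^k$, so that after subtraction the stochastic integrals cancel. Setting $Y_{\eta,i}^{k,N}(s):=X_{\eta,i}^{k,N}(s)-\bar X_{\eta,i}^k(s)$ and adding and subtracting $\frac{1}{N}\sum_{\ell=1}^N\na V_{ij}^\eta(\bar X_{\eta,i}^k-\bar X_{\eta,j}^\ell)$ inside the drift, the integrand splits into a Lipschitz part
$$
  A_{ij}^k(\tau) = \frac{1}{N}\sum_{\ell=1}^N\big[\na V_{ij}^\eta(X_{\eta,i}^{k,N}-X_{\eta,j}^{\ell,N}) - \na V_{ij}^\eta(\bar X_{\eta,i}^k-\bar X_{\eta,j}^\ell)\big](\tau)
$$
and a fluctuation part
$$
  B_{ij}^k(\tau) = \frac{1}{N}\sum_{\ell=1}^N\na V_{ij}^\eta(\bar X_{\eta,i}^k(\tau)-\bar X_{\eta,j}^\ell(\tau)) - (\na V_{ij}^\eta * u_{\eta,j})(\bar X_{\eta,i}^k(\tau),\tau).
$$

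For $A_{ij}^k$, the bound $\|D^2 V_{ij}^\eta\|_{L^\infty}\le C\eta^{-(d+2)}\|D^2V_{ij}\|_{L^\infty}$ gives $|A_{ij}^k(\tau)|\le C\eta^{-(d+2)}\sup_\ell(|Y_{\eta,i}^{k,N}(\tau)|+|Y_{\eta,j}^{\ell,N}(\tau)|)$. For $B_{ij}^k$, the key observation is that by Lemma \ref{lem.nonloc} the processes $\bar X_{\eta,j}^\ell$ for distinct $(j,\ell)$ are independent and each has density $u_{\eta,j}(\cdot,\tau)$; diagonal terms with $\ell=k$, $j=i$ vanish since $V_{ii}^\eta$ is radial. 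Hence, conditionally on $\bar X_{\eta,i}^k(\tau)$, the summands defining $B_{ij}^k(\tau)$ are i.i.d., mean-zero, and uniformly bounded by $\|\na V_{ij}^\eta\|_{L^\infty}\le C\eta^{-(d+1)}$. A Bernstein-type concentration inequality combined with a union bound over $k=1,\ldots,N$ then yields
$$
  \E\Big(\sup_{k=1,\ldots,N}|B_{ij}^k(\tau)|^2\Big)\le C\frac{\log N}{N\eta^{2(d+1)}}.
$$

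Defining $\Phi(s):=\E(\sum_i\sup_{\tau\le s}\sup_k|Y_{\eta,i}^{k,N}(\tau)|^2)$ and applying Cauchy-Schwarz in $\tau$ to the integrated drift difference, the two bounds above yield an integral inequality of the form
$$
  \Phi(s) \le C t\,\eta^{-2(d+2)}\int_0^s\Phi(\tau)\,d\tau + C t^2\frac{\log N}{N\eta^{2(d+1)}},\qquad 0\le s\le t.
$$
Gronwall's lemma produces an exponential factor $\exp(Ct^2\eta^{-(2d+4)})$, which by the standing assumption $\eta^{-(2d+4)}\le\eps\log N$ is bounded by $N^{C(t)\eps}$; the remaining prefactor $\log N/\eta^{2(d+1)}$ is likewise absorbed into $N^{C(t)\eps}$ at the price of enlarging $C(t)$, yielding the claimed $C(t)/N^{1-C(t)\eps}$.

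The main obstacle is the interplay between the fluctuation estimate and the Gronwall exponent. The Bernstein bound gives per-$k$ probabilities decaying like $\exp(-cN\eta^{2(d+1)}r^2)$, and the union bound over $N$ indices $k$ unavoidably costs a $\log N$ factor, while the Lipschitz constant $\|D^2 V_{ij}^\eta\|_{L^\infty}\sim\eta^{-(d+2)}$ inflates the Gronwall exponent. The scaling hypothesis $\eta^{-(2d+4)}\le\eps\log N$ is tight precisely at the threshold where both effects balance, leaving a strictly positive exponent $1-C(t)\eps$ for $1/N$.
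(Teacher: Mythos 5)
Your proposal is correct and follows the same overall skeleton as the paper's proof: couple the two systems through the common initial data and Brownian motions so the stochastic integrals cancel, split the drift difference into a Lipschitz part and a fluctuation part, bound the Lipschitz constant of $\na V_{ij}^\eta$ by $C\eta^{-(d+2)}$, and close with Gronwall, absorbing the exponential $\exp(Ct\,\eta^{-2d-4})$ into $N^{C(t)\eps}$ via the standing relation $\eps\log N\ge\eta^{-2d-4}$. (The paper splits the Lipschitz part further into two terms, perturbing first the $\ell$-argument and then the $k$-argument, but this is only cosmetic.) The one genuine difference is your treatment of the fluctuation term $B_{ij}^k$: the paper expands the square of the sum $\sum_\ell Z_{i,j}^{k,\ell}$, uses the independence of the $\bar X_{\eta,j}^\ell$ and the identity $\E(Z_{i,j}^{k,\ell})=0$ to kill the off-diagonal terms, and obtains $C/(N\eta^{2d+2})$ with no logarithm; you instead invoke a Bernstein/Hoeffding bound for the conditionally i.i.d.\ bounded summands together with a union bound over $k$, paying an extra $\log N$. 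Both routes reach the stated estimate, since $\log N$ (indeed any power of $\log N$) is absorbed into $N^{C(t)\eps}$ at the same price as the paper's prefactor $\eta^{-2d-2}\le\eps\log N$. Your version is arguably the more careful one: the paper's passage from $\E\big(\sup_k\sum_{\ell,m}Z_{i,j}^{k,\ell}\cdot Z_{i,j}^{k,m}\big)$ to $\sum_{\ell,m}\sup_k\E\big(Z_{i,j}^{k,\ell}\cdot Z_{i,j}^{k,m}\big)$ interchanges expectation and supremum in the unfavorable direction, which is exactly the gap your union bound is designed to close. Two small imprecisions on your side: the diagonal summand with $j=i$, $\ell=k$ does not vanish entirely (only $\na V_{ii}^\eta(0)=0$; the subtracted convolution survives and contributes an innocuous $O(\eta^{-1}/N)$), and the absorption of $\log N$ into $N^{C\eps}$ strictly produces an exponent like $\eps\log(1/\eps)$ rather than $C\eps$ --- neither affects the conclusion.
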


\begin{proof}
We set
$$
  S_t = \sum_{i=1}^n\sup_{0<s<t}\sup_{k=1,\ldots,N}
	\big|X_{\eta,i}^{k,N}(s) - \bar X_{\eta,i}^k(s)\big|^2.
$$
The difference of equations \eqref{1.Xi} and \eqref{1.barXi}, satisfied by
$X_{\eta,i}^{k,N}$ and $\bar X_{\eta,i}^k$, respectively, equals
\begin{align*}
  &X_{\eta,i}^{k,N}(t) - \bar X_{\eta,i}^k(t) \\
	&\phantom{x}{}= -\int_0^t \sum_{j=1}^n\frac{1}{N}\sum_{\ell=1}^N\Big(
	\na V_{ij}^\eta\big(X_{\eta,i}^{k,N}(s) - X_{\eta,j}^{\ell,N}(s)\big)
	- (\na V_{ij}^\eta * u_{\eta,j})(\bar X_{\eta,i}^k(s),s)\Big)ds
\end{align*}
and thus,
\begin{align*}
  \sum_{i=1}^n&\sup_{k=1,\ldots,N}
	\big|X_{\eta,i}^{k,N}(s) - \bar X_{\eta,i}^\ell(s)\big|^2
	\le \sum_{i=1}^n\bigg(\int_0^t\frac{1}{N}\sum_{j=1}^n\sup_{k=1,\ldots,N} \\
	&{}\times\bigg|\sum_{\ell=1}^N\Big(
	\na V_{ij}^\eta\big(X_{\eta,i}^{k,N}(s)-X_{\eta,j}^{\ell,N}(s)\big)
	- (\na V_{ij}^\eta * u_{\eta,j})(\bar X_{\eta,i}^k(s),s)\Big)\bigg|ds\bigg)^2.
\end{align*}
Taking the supremum in $(0,t)$ and the expectation and using the
Cauchy-Schwarz inequality with respect to $t$ yields
\begin{align*}
  \E(S_t) &\le \sum_{i=1}^n\frac{t}{N^2}
	\int_0^t\E\bigg(\sum_{j=1}^n\sup_{k=1,\ldots,N} \\
	&\phantom{xx}{}{}\times\bigg|\sum_{\ell=1}^N\Big(
	\na V_{ij}^\eta\big(X_{\eta,i}^{k,N}(s)-X_{\eta,j}^{\ell,N}(s)\big)
	- (\na V_{ij}^\eta * u_{\eta,j})(\bar X_{\eta,i}^k(s),s)\Big)\bigg|ds\bigg)^2 \\
	&\le \sum_{i=1}^n\frac{t}{N^2}\int_0^t\bigg\{
	\E\sum_{j=1}^n\sup_{k=1,\ldots,N} \\
	&\phantom{xx}{}\times\bigg|\sum_{\ell=1}^N\Big(
	\na V_{ij}^\eta\big(X_{\eta,i}^{k,N}(s)-X_{\eta,j}^{\ell,N}(s)\big)
	- \na V_{ij}^\eta\big(X_{\eta,i}^{k,N}(s)-\bar X_{\eta,j}^{\ell}(s)\big)
	\Big)\bigg|^2 \\
	&\phantom{xx}{}+ \E\sum_{j=1}^n\sup_{k=1,\ldots,N}\bigg|\sum_{\ell=1}^N
	\Big(\na V_{ij}^\eta\big(X_{\eta,i}^{k,N}(s)-\bar X_{\eta,j}^{\ell}(s)\big)
	- \na V_{ij}^\eta\big(\bar X_{\eta,i}^{k}(s)-\bar X_{\eta,j}^{\ell}(s)\big)\Big)
	\bigg|^2 \\
	&\phantom{xx}{}+ \E\sum_{j=1}^n\sup_{k=1,\ldots,N}\bigg|\sum_{\ell=1}^N
	\Big(\na V_{ij}^\eta\big(\bar X_{\eta,i}^{k}(s)-\bar X_{\eta,j}^{\ell}(s)\big)
	- (\na V_{ij}^\eta * u_{\eta,j})(\bar X_{\eta,i}^k(s),s)\Big)
	\bigg|^2\bigg\}ds \\
	&=: J_1 + J_2 + J_3.
\end{align*}
We estimate the terms $J_1$, $J_2$, and $J_3$ separately. 

Let $L_{ij}^\eta$ be the Lipschitz constant of $\na V_{ij}^\eta$. Because of
$V_{ij}^\eta(z) = \eta^{-d}V_{ij}(|z|/\eta)$, we obtain
$L_{ij}^\eta = \eta^{-d-2}\|D^2 V_{ij}\|_{L^\infty}$. Hence,
\begin{align*}
  |J_1| &\le \sum_{i=1}^n\frac{t}{N^2}\int_0^t\E\bigg(\sum_{j=1}^n(L_{ij}^\eta)^2
	\bigg(\sum_{\ell=1}^N\big|X_{\eta,j}^{\ell,N}(s)-\bar X_{\eta,j}^\ell(s)\big|\bigg)^2
	\bigg)ds \\
	&\le t\sum_{i,j=1}^n(L_{ij}^\eta)^2\int_0^t\E\bigg(\sup_{\ell=1,\ldots,N}
	\big|X_{\eta,j}^{\ell,N}(s)-\bar X_{\eta,j}^\ell(s)\big|^2\bigg)ds \\
	&\le \frac{tn}{\eta^{2d+4}}\sup_{i,j=1,\ldots,n}\|D^2 V_{ij}\|_{L^\infty}^2
	\int_0^t\E(S_s)ds.
\end{align*}
Furthermore, by similar arguments,
\begin{align*}
  |J_2| &\le \sum_{i=1}^n\frac{t}{N^2}\int_0^t\E\bigg(\sum_{j=1}^n(L_{ij}^\eta)^2
	\sup_{k=1,\ldots,N}\bigg(\sum_{\ell=1}^N
	\big|X_{\eta,i}^{k,N}(s)-\bar X_{\eta,j}^k(s)\big|\bigg)^2\bigg)ds \\
	&\le t\sum_{i,j=1}^n(L_{ij}^\eta)^2\int_0^t\E\bigg(\sup_{k=1,\ldots,N}
	\big|X_{\eta,i}^{k,N}(s)-\bar X_{\eta,j}^k(s)\big|^2\bigg)ds \\
  &\le \frac{tn}{\eta^{2d+4}}\sup_{i,j=1,\ldots,n}\|D^2V_{ij}\|_{L^\infty}^2
	\int_0^t\E(S_s)ds.
\end{align*}
For the third term, we set
$$
  Z_{i,j}^{k,\ell}(s) := \na V_{ij}^\eta
	\big(\bar X_{\eta,i}^{k}(s)-\bar X_{\eta,j}^{\ell}(s)\big)
	- (\na V_{ij}^\eta * u_{\eta,j})(\bar X_{\eta,i}^k(s),s),
$$
write the square as a product of two sums, and use the independence of
$Z_{i,j}^{k,1},\ldots,Z_{i,j}^{k,N}$:
\begin{align*}
  |J_3| &= \sum_{i=1}^n\frac{t}{N^2}\int_0^t
	\E\sum_{j=1}^n\sup_{k=1,\ldots,N}\bigg|\sum_{\ell=1}^N 
	Z_{i,j}^{k,\ell}(s)\bigg|^2 ds \\
	&= \sum_{i,j=1}^n\frac{t}{N^2}\int_0^t\E\bigg(\sup_{k=1,\ldots,N}
	\sum_{\ell=1}^N Z_{i,j}^{k,\ell}(s)\cdot\sum_{m=1}^N Z_{i,j}^{k,m}(s)\bigg) ds \\
	&\le \sum_{i,j=1}^n\frac{t}{N^2}\sum_{\ell,m=1}^N\int_0^t\sup_{k=1,\ldots,N}
	\E\bigg(Z_{i,j}^{k,\ell}(s)\cdot Z_{i,j}^{k,m}(s)\bigg)ds \\
	&= \sum_{i,j=1}^n\frac{t}{N^2}\sum_{\ell=1}^N\int_0^t\sup_{k=1,\ldots,N}
	\E\big|Z_{i,j}^{k,\ell}(s)\big|^2 ds \\
	&\phantom{xx}{}
	+ \sum_{i,j=1}^n\frac{t}{N^2}\sum_{\ell\neq m}\int_0^t\sup_{k=1,\ldots,N}
	\E\big(Z_{i,j}^{k,\ell}(s)\big)\E\big(Z_{i,j}^{k,m}(s)\big)ds.
\end{align*}

We claim that the expectation  of $Z_{i,j}^{k,\ell}$ vanishes. Indeed, since
$\bar X^k_{\eta,i}$ and $\bar X^\ell_{\eta,j}$ are independent with
distribution functions $u_{\eta,i}$ and $u_{\eta,j}$, the joint distribution
is $u_{\eta,i}\otimes u_{\eta,j}$. This gives
\begin{align*}
  \E\big(Z_{ij}^{k,\ell}(s)\big)
	&= \int_{\R^d}\int_{\R^d}\big(\na V_{ij}^\eta(\xi-x) 
	- (\na V_{ij}^\eta * u_{\eta,j})(\xi,s)\big)u_{\eta,i}(\xi,s)u_{\eta,j}(x,s)dxd\xi \\
	&= \int_{\R^d} u_{\eta,i}(\xi,s)\bigg(\int_{\R^d} \na V_{ij}^\eta(\xi-x)u_{\eta,j}(x,s)
	dx - (\na V_{ij}^\eta * u_{\eta,j})(\xi,s)\bigg) d\xi = 0.
\end{align*}
Therefore, using the estimates $|\na V_{ij}^\eta|\le C\eta^{-d-1}$,
$\|\na V_{ij}^\eta\|_{L^2}\le C\eta^{-1}$, and consequently
$$
  \|\na V_{ij}^\eta * u_{\eta,j}\|_{L^\infty}
	\le \|\na V_{ij}^\eta\|_{L^2}\|u_{\eta,j}\|_{L^2}
	\le C\eta^{-1}, 
$$
we deduce that
$$
  |J_3| = \sum_{i,j=1}^n\frac{t}{N^2}\sum_{\ell=1}^N\int_0^t\sup_{k=1,\ldots,N}
	\E\big|Z_{i,j}^{k,\ell}(s)\big|^2 ds
  \le \frac{n^2}{N}\frac{Ct^2}{\eta^{2d+2}}.
$$

Summarizing these estimations, we conclude that
$$ 
  \E(S_t) \le \frac{tn}{\eta^{2d+4}}\sup_{i,j=1,\ldots,n}\|D^2 V_{ij}\|_{L^\infty}^2
	\int_0^t\E(S_s)ds + \frac{n^2}{N}\frac{Ct^2}{\eta^{2d+2}},
$$
and, by Gronwall's inequality,
$$
  \E(S_t) \le \frac{Ct^2}{N\eta^{2d+2}}\exp\bigg(\frac{Ct}{\eta^{2d+4}}\bigg),
	\quad t>0.
$$
For fixed $\eps\in(0,1)$ and $\eta\in(0,1)$, we choose $N\in\N$ such that
$\eps\log N\ge \eta^{-2d-4}$. Using $\eta^{-2d-2}\le\eta^{-2d-4}
\le \exp(C\eta^{-2d-4})$ for $C\ge 1$, we obtain
\begin{align*}
  \E(S_t) &\le \frac{C}{N\eta^{2d+2}}\exp\bigg(\frac{Ct}{\eta^{2d+4}}\bigg)
	\le \frac{C}{N}\exp\bigg(\frac{C(1+t)}{\eta^{2d+4}}\bigg) \\
	&\le \frac{C}{N}\exp\big(C\eps(1+t)\log N\big)
	\le CN^{-1+C\eps(1+t)}.
\end{align*}
This proves the result.
\end{proof}

Next, we prove an estimate for the difference $\bar X_{\eta,i}^k-\widehat X_i^k$.

\begin{lemma}\label{lem.error2}
Let the assumptions of Theorem \ref{thm.main} hold and let $s>d/2+2$, $t>0$. Then
$$
  \E\bigg(\sum_{i=1}^n\sup_{0<s<t}\sup_{k=1,\ldots,N}\big|\bar X_{\eta,i}^k(s)
	- \widehat X_i^k(s)\big|\bigg) \le C(t)\eta.
$$
\end{lemma}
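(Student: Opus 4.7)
The plan is to derive a pathwise bound on $|\bar X_{\eta,i}^k(t)-\widehat X_i^k(t)|$ of order $\eta$ that is uniform in $k$ and in $\omega$, so that the supremum over $k$ and $s$ inside the expectation costs nothing. Since the SDEs \eqref{1.barXi} and \eqref{1.hatXi} share the initial datum $\xi_i^k$ and the Brownian motion $W_i^k$, the stochastic integrals cancel in the difference and
\begin{equation*}
  \bar X_{\eta,i}^k(t)-\widehat X_i^k(t)
  = -\sum_{j=1}^n\int_0^t\big[(\na V_{ij}^\eta * u_{\eta,j})(\bar X_{\eta,i}^k(s),s) - a_{ij}\na u_j(\widehat X_i^k(s),s)\big]\,ds.
\end{equation*}
Adding and subtracting $a_{ij}\na u_{\eta,j}(\bar X_{\eta,i}^k)$ and $a_{ij}\na u_j(\bar X_{\eta,i}^k)$ decomposes the integrand into a mollification defect $A_1 = (\na V_{ij}^\eta * u_{\eta,j}-a_{ij}\na u_{\eta,j})(\bar X_{\eta,i}^k)$, a PDE error $A_2 = a_{ij}\na(u_{\eta,j}-u_j)(\bar X_{\eta,i}^k)$, and a Lipschitz term $A_3 = a_{ij}[\na u_j(\bar X_{\eta,i}^k) - \na u_j(\widehat X_i^k)]$.

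The term $A_1$ is estimated exactly as in \eqref{3.diff}, but using the $L^\infty$ rather than the $L^2$ norm of $D^2 u_{\eta,j}$: the mean-value argument together with the embedding $H^s\hookrightarrow W^{2,\infty}$, valid because $s>d/2+2$, gives $|A_1|\le\eta A_{ij}\|D^2 u_{\eta,j}\|_{L^\infty}\le C\eta$ pointwise in $k$ and $\omega$. The Lipschitz term satisfies $|A_3|\le C|\bar X_{\eta,i}^k-\widehat X_i^k|$, since the Lipschitz constant of $\na u_j$ is bounded by $\|D^2 u_j\|_{L^\infty}\le C\|u_j\|_{H^s}<\infty$ by the same embedding.

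The main obstacle is $A_2$, because pathwise it is only controlled by $\|\na(u_{\eta,j}-u_j)(s)\|_{L^\infty}$, and the $L^2$-in-space error estimate \eqref{1.error} is insufficient (interpolating it with the uniform $H^s$ bound gives only the sub-optimal rate $\eta^{1-(d/2+1)/s}$). To obtain the full power $\eta$ I would upgrade Proposition~\ref{prop.ex2} to a higher Sobolev norm: test the equation for $u_\eta-u$ with $D^\alpha(u_\eta-u)$ for all $|\alpha|\le k$ with some $d/2+1<k\le s-1$, estimating the nonlinear terms by the Moser-type inequality (Lemma~\ref{lem.moser}) and the derivative analogue of \eqref{3.diff}, $\|V_{ij}^\eta*\na D^\alpha u_{\eta,j}-a_{ij}\na D^\alpha u_{\eta,j}\|_{L^2}\le\eta C\|u_{\eta,j}\|_{H^{|\alpha|+2}}$, which is available because $\na u_\eta\in L^2(0,\infty;H^{s+1})$ from Proposition~\ref{prop.ex}. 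This should yield
\begin{equation*}
  \sup_{0<s<t}\|u_\eta-u\|_{H^k}^2 + \gamma\int_0^t\|\na(u_\eta-u)\|_{H^k}^2\,ds \le C(t)\eta^2,
\end{equation*}
and the Sobolev embedding $H^k\hookrightarrow W^{1,\infty}$ then delivers $\|\na(u_\eta-u)(s)\|_{L^\infty}\le C(t)\eta$, so that $|A_2|\le C(t)\eta$ pathwise.

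Putting the three bounds together produces the pathwise integral inequality
\begin{equation*}
  |\bar X_{\eta,i}^k(t)-\widehat X_i^k(t)| \le C(t)\eta + C\int_0^t |\bar X_{\eta,i}^k(s)-\widehat X_i^k(s)|\,ds,
\end{equation*}
valid uniformly in $k\in\{1,\ldots,N\}$ and $\omega\in\Omega$. A deterministic Gronwall argument then gives $\sup_{0<s<t}|\bar X_{\eta,i}^k(s)-\widehat X_i^k(s)|\le C(t)\eta$ uniformly in $k$ and $\omega$; summing over $i$ and taking expectation yields the claim. The only non-routine ingredient is the higher-order error estimate described above.
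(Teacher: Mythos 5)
Your proposal is sound in outline but routes the argument through a pathwise, $L^\infty$-based estimate, whereas the paper works in expectation and only ever needs $L^2$-in-space information. Concretely, the paper uses the same three-term decomposition (mollification defect, PDE error $\na(u_{\eta}-u)$, Lipschitz term), but for the first two terms it exploits that the test points $\widehat X_i^k(s)$ have density $u_i(\cdot,s)$, so the relevant expectations become $\int|g(x,s)|u_i(x,s)\,dx\le\|g(s)\|_{L^2}\|u_i(s)\|_{L^2}$; the mollification defect is then handled by \eqref{3.diff} and the PDE error by the already-proved $L^2$ estimate \eqref{1.error}, with no new PDE input. Your pathwise bound on $A_2$ instead requires $\|\na(u_{\eta}-u)(s)\|_{L^\infty}\le C\eta$, which is \emph{not} available from the paper as it stands, and you correctly identify that you must first upgrade \eqref{1.error} to $\|u_\eta-u\|_{L^\infty(0,T;H^k)}\le C(T)\eta$ for some $d/2+1<k\le s-1$. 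That upgrade is plausible --- it is the $H^k$ analogue of the paper's $L^2$ error estimate, provable with the Moser calculus of Lemma \ref{lem.moser}, the derivative version of \eqref{3.diff} (which needs $u_\eta\in L^2(0,T;H^{k+2})$, available since $k\le s-1$), and the same smallness/absorption mechanism --- but it is a genuine extra lemma that you only sketch, and it is where essentially all the work of your route sits. What your approach buys: the bounds are uniform in $\omega$ and in $k$, so the supremum over $k=1,\ldots,N$ inside the expectation is handled cleanly (the paper's treatment of its terms $K_1,K_2$ in effect replaces $\E\sup_k$ by the expectation of a single copy, a step your argument does not need). What it costs: a stronger error estimate for the PDE system than the paper ever establishes. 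Either route reaches the stated $O(\eta)$ rate.
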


\begin{proof}
We use similar arguments as in the proof of Lemma \ref{lem.error1}. 
Taking the difference of equations \eqref{1.barXi} and \eqref{1.hatXi}, satisfied by 
$\bar X_{\eta,i}^k$ and $\widehat X_i^k$, respectively, and setting
$$
  U_t = \sum_{i=1}^n\sup_{0<s<t}\sup_{k=1,\ldots,N}
	\big|\bar X_{\eta,i}^k(s)-\widehat X_i^k(s)\big|,
$$
if follows that
\begin{align*}
  \E(U_t) &\le \sum_{i=1}^n\int_0^t\E\bigg(\sum_{j=1}^n\sup_{k=1,\ldots,N}
	\Big|a_{ij}\na u_j(\widehat X_i^k(s),s) - (V_{ij}^\eta * \na u_{\eta,j})
	(\bar X_{\eta,i}^k(s),s)\Big|\bigg)ds \\
  &\le \sum_{i=1}^n\int_0^t\E\bigg(\sum_{j=1}^n\sup_{k=1,\ldots,N}
	\Big|a_{ij}\na u_j(\widehat X_i^k(s),s) - (V_{ij}^\eta * \na u_{j})
	(\widehat X_{i}^k(s),s)\Big|\bigg)ds \\
  &\phantom{xx}{}+ \sum_{i=1}^n\int_0^t\E\bigg(\sum_{j=1}^n\sup_{k=1,\ldots,N}
	\Big|(V_{ij}^\eta * \na u_{j})(\widehat X_{\eta,i}^k(s),s)
  - (V_{ij}^\eta * \na u_{\eta,j})(\widehat X_{i}^k(s),s)\Big|\bigg)ds \\
	&\phantom{xx}{}+ \sum_{i=1}^n\int_0^t\E\bigg(\sum_{j=1}^n\sup_{k=1,\ldots,N}
	\Big|(V_{ij}^\eta * \na u_{\eta,j})(\widehat X_{i}^k(s),s)
  - (V_{ij}^\eta * \na u_{\eta,j})(\bar X_{\eta,i}^k(s),s)\Big|\bigg)ds \\
	&=: K_1 + K_2 + K_3.
\end{align*}
Using \eqref{3.diff}, the first two terms on the right-hand side are 
estimated according to
\begin{align*}
  K_1 &\le \sum_{i,j=1}^n\int_0^t\int_{\R^d}\big|\big(a_{ij}\na u_j(x,s)
	- (V_{ij}^\eta * \na u_j)(x,s)\big)u_i(x,s)\big|dxds \\
	&\le n^2\max_{i,j=1,\ldots,n}\big\|a_{ij}\na u_j-V_{ij}^\eta * \na u_j
	\big\|_{L^2(0,t;L^2)}\|u_i\|_{L^2(0,t;L^2)} \\
	&\le C(n)\eta\|D^2 u\|_{L^2(0,t;L^2)} \le C\eta, \\
	K_2 &\le \sum_{i,j=1}^n\int_0^t\int_{\R^d}\big|V_{ij}^\eta * \na(u_j-u_{\eta,j})
	u_i\big| dxds \\
	&\le n^2\max_{i,j=1,\ldots,n}
	\|V_{ij}^\eta\|_{L^1}\|\na(u_j-u_{\eta,j})\|_{L^2(0,t;L^2)}
	\|u_i\|_{L^2(0,t;L^2)} \le C\eta,
\end{align*}
where we have used Lemma \ref{lem.young} (ii) and the error estimate from 
Lemma \ref{lem.error1}. Finally, the term $K_3$ can be controlled by
\begin{align*}
  K_3 &\le \sum_{i,j=1}^n \|\na(V_{ij,}^\eta * \na u_{\eta,j})\|_{L^\infty}
	\int_0^t\E(U_s)ds \\
	&\le n^2\max_{i,j=1,\ldots,n}\|V_{ij}^\eta\|_{L^1}\|D^2 u_\eta\|_{L^\infty}
	\int_0^t\E(U_s)ds \le C\int_0^t\E(U_s)ds.
\end{align*}
We need the assumption $s>d/2+2$ for the continuous embedding
$H^s(\R^d)\hookrightarrow W^{2,\infty}(\R^d)$, which allows us to estimate
$D^2 u_\eta$ in $L^\infty(\R^d)$. This shows that
$$
  \E(U_t) \le C\eta + C\int_0^t\E(U_s)ds,
$$
and Gronwall's inequality yields $\E(U_t)\le C(t)\eta$ for $t>0$.
The statement of the lemma follows after taking the supremum over $t>0$.
\end{proof}

Lemmas \ref{lem.error1} and \ref{lem.error2} imply Theorem \ref{thm.main}. Indeed,
it follows that
\begin{align*}
  \E&\bigg(\sum_{i=1}^n\sup_{0<s<t}\sup_{k=1,\ldots,N}
	\big|X_{\eta,i}^{k,N}(s) - \widehat X_i^k(s)\big|\bigg) \\
	&\le \E\bigg(\sum_{i=1}^n\sup_{0<s<t}\sup_{k=1,\ldots,N}
	\big|X_{\eta,i}^{k,N}(s) - \bar X_{\eta,i}^k(s)\big|\bigg)
	+ \E\bigg(\sum_{i=1}^n\sup_{0<s<t}\sup_{k=1,\ldots,N}
	\big|\bar X_{\eta,i}^k(s) - \widehat X_i^k(s)\big|\bigg) \\
  &\le CN^{(-1+C(t)\eps)/2} + C(t)\eta \le C(t)\eta,
\end{align*}
since the choice $\log N\ge 1/(\eps\eta^{2d+4})$ is equivalent to
$$
  N^{(-1+C(t)\eps)/2} \le \exp\bigg(\frac{1}{2\eps}(-1+C(t)\eps)\eta^{-2d-4}\bigg),
$$
and the right-hand side is smaller than $\eta$ possibly times a constant $C(t)$.


\begin{appendix}
\section{Some auxiliary results}

We recall some auxiliary results. 

\begin{lemma}[Moser-type calculus inequality; Prop.~2.1 in \cite{Maj84}]
\label{lem.moser}
Let $f$, $g\in H^s(\R^d)$ with $s>d/2+1$ and let 
$\alpha \in \N^d$ be a multi-index of order $|\alpha|\le s$. 
Then, for some constant $C_M>0$ only depending on $s$ and $d$,
$$
  \|D^\alpha(fg)\|_{L^2} \le C_M\big(\|f\|_{L^\infty}\|D^s g\|_{L^2}
	+ \|g\|_{L^\infty}\|D^s f\|_{L^2}\big).
$$
\end{lemma}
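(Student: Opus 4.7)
The plan is to reduce the Moser inequality to the classical Leibniz rule combined with a Gagliardo--Nirenberg interpolation and Hölder's inequality. First I would expand
$$
  D^\alpha(fg) = \sum_{\beta\le\alpha}\binom{\alpha}{\beta}\, D^\beta f\, D^{\alpha-\beta}g,
$$
take the $L^2$ norm, and apply the triangle inequality. The two boundary contributions $\beta=0$ and $\beta=\alpha$ are immediate: they produce exactly $\|f\|_{L^\infty}\|D^{|\alpha|}g\|_{L^2}$ and $\|g\|_{L^\infty}\|D^{|\alpha|}f\|_{L^2}$, both of which are bounded by the right-hand side since $|\alpha|\le s$ (use $\|D^{|\alpha|}g\|_{L^2}\le \|g\|_{H^s}$ and interpolation against the $L^2$ part of $H^s$, or simply embed $\|D^{|\alpha|}g\|_{L^2}\le \|D^s g\|_{L^2}+\|g\|_{L^2}$ if $|\alpha|<s$, absorbing lower-order terms via $\|g\|_{L^2}\le \|g\|_{L^\infty}^{1/2}\|g\|_{L^2}^{1/2}$ in the standard way, or by handling $s$ as a purely integer statement first).

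For the interior terms with $0<|\beta|<|\alpha|$, I would apply Hölder's inequality with the conjugate pair $p=2|\alpha|/|\beta|$ and $q=2|\alpha|/(|\alpha|-|\beta|)$, yielding
$$
  \|D^\beta f\, D^{\alpha-\beta}g\|_{L^2}\le \|D^\beta f\|_{L^p}\,\|D^{\alpha-\beta}g\|_{L^q}.
$$
Then the Gagliardo--Nirenberg inequality on $\mathbb{R}^d$ supplies
$$
  \|D^\beta f\|_{L^p}\le C\,\|f\|_{L^\infty}^{1-|\beta|/|\alpha|}\,\|D^{|\alpha|}f\|_{L^2}^{|\beta|/|\alpha|},
$$
and analogously for $g$ with the conjugate exponent. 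Multiplying these two estimates and invoking Young's inequality $ab\le \tfrac{1}{r}a^r+\tfrac{1}{r'}b^{r'}$ with $r=|\alpha|/|\beta|$ and $r'=|\alpha|/(|\alpha|-|\beta|)$ collapses the product of $L^\infty$ and $H^{|\alpha|}$ norms to the symmetric sum $\|f\|_{L^\infty}\|D^{|\alpha|}g\|_{L^2}+\|g\|_{L^\infty}\|D^{|\alpha|}f\|_{L^2}$. Finally $\|D^{|\alpha|}f\|_{L^2}\le \|D^s f\|_{L^2}+\|f\|_{L^2}$ lets me replace the top index by $s$, with the low-order piece again absorbed by an $L^\infty$-$L^2$ interpolation.

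I expect the main obstacle to be purely bookkeeping: verifying that the Gagliardo--Nirenberg exponents are admissible for every intermediate $\beta$ (requiring $s>d/2$ to ensure $H^s\hookrightarrow L^\infty$, and $|\alpha|\le s$ so that the interpolation weight $|\beta|/|\alpha|\in(0,1)$ is compatible with the scaling condition), and checking that the finitely many binomial constants $\binom{\alpha}{\beta}$ assemble into a constant $C_M$ depending only on $s$ and $d$. No new analytical idea beyond Leibniz, Hölder, Gagliardo--Nirenberg, and Young is needed; the cleanest presentation is to prove the estimate first for integer $s$ and then, if desired, extend to fractional $s$ via the Littlewood--Paley characterization of $H^s$, where the same strategy is executed with paraproduct decompositions in place of the Leibniz rule.
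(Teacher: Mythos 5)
The paper does not prove this lemma at all: it is quoted as Prop.~2.1 of Majda \cite{Maj84}, and the argument you outline --- Leibniz rule, H\"older with the exponent pair $2|\alpha|/|\beta|$ and $2|\alpha|/(|\alpha|-|\beta|)$, the Gagliardo--Nirenberg interpolation $\|D^\beta f\|_{L^{2|\alpha|/|\beta|}}\le C\|f\|_{L^\infty}^{1-|\beta|/|\alpha|}\|D^{|\alpha|}f\|_{L^2}^{|\beta|/|\alpha|}$, then Young's inequality --- is precisely the classical proof given in that reference. For the top-order case $|\alpha|=s$ (integer $s$) your write-up is complete and correct, and the extension to fractional $s$ via Littlewood--Paley is a reasonable, standard remark.

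The one step I would not accept as written is the absorption of the lower-order contributions when $|\alpha|<s$. The inequality $\|g\|_{L^2}\le\|g\|_{L^\infty}^{1/2}\|g\|_{L^2}^{1/2}$ you invoke is equivalent to $\|g\|_{L^2}\le\|g\|_{L^\infty}$, which fails on the unbounded domain $\R^d$ (a bump of height $1$ and large support). More seriously, if $\|D^s g\|_{L^2}$ is read as the \emph{homogeneous} top-order seminorm, the asserted estimate for $|\alpha|<s$ cannot hold at all: replacing $(f,g)$ by $(f(\lambda\,\cdot),g(\lambda\,\cdot))$ scales the left-hand side like $\lambda^{|\alpha|-d/2}$ and the right-hand side like $\lambda^{s-d/2}$, so no absorption trick can close the gap as $\lambda\to0$. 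The resolution is notational: in Majda's convention (and in the way the present paper actually uses the lemma, where $\|D^s u_i\|_{L^2}$ is always subsequently bounded by $\|u\|_{H^s}$) the symbol $\|D^s f\|_{L^2}$ stands for $\sum_{|\beta|\le s}\|D^\beta f\|_{L^2}$, i.e.\ the full inhomogeneous $H^s$ norm. With that reading the boundary terms $\beta=0$ and $\beta=\alpha$ are immediate and need no absorption, the interior terms are handled exactly as you do, and the step $\|D^{|\alpha|}f\|_{L^2}\le\|D^s f\|_{L^2}+\|f\|_{L^2}$ becomes harmless because the $\|f\|_{L^2}$ piece is already part of the right-hand side. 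You should either adopt that reading explicitly or restrict the homogeneous version of your argument to $|\alpha|=s$.
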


\begin{lemma}[Young's convolution inequality; formula (7) on page 107 
in \cite{LiLo01}]\label{lem.young}
{\rm (i)} Let $1\le p, q\le \infty$ satisfying $1/p+1/q=1/r+1$ and
$f\in L^p(\R^d)$, $g\in L^q(\R^d)$. Then
$$
  \|f*g\|_{L^r} \le \|f\|_{L^p}\|g\|_{L^q}.
$$

{\rm (ii)} Let $p$, $q$, $r\ge 1$ satisfying $1/p+1/q+1/r=2$ and
$f\in L^p(\R^d)$, $g\in L^q(\R^d)$, $h\in L^r(\R^d)$. Then, for some constant
$C>0$ only depending on $p$, $q$, $r$, and $d$,
$$
  \bigg|\int_{\R^d}(f * g)h dx\bigg|
	\le C\|f\|_{L^p}\|g\|_{L^q}\|h\|_{L^r}. 
$$
\end{lemma}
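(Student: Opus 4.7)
The plan is to prove part (i) by a pointwise three-factor Hölder decomposition of the convolution kernel, and then derive part (ii) from part (i) combined with a final application of Hölder's inequality.

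For part (i), the key observation is that the Young relation $1/p + 1/q = 1/r + 1$ is equivalent to
\[
 \frac{1}{r} + \Bigl(\frac{1}{p} - \frac{1}{r}\Bigr) + \Bigl(\frac{1}{q} - \frac{1}{r}\Bigr) = 1,
\]
which suggests applying the three-function Hölder inequality with exponents $r$, $\alpha := rp/(r-p)$, $\beta := rq/(r-q)$. Concretely, I would factorize
\[
 |f(x-y)g(y)| = \bigl(|f(x-y)|^p |g(y)|^q\bigr)^{1/r} \cdot |f(x-y)|^{1 - p/r} \cdot |g(y)|^{1 - q/r},
\]
which is valid because the exponents of $|f(x-y)|$ sum to $1$, and likewise for $|g(y)|$. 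Integrating in $y$ and using Hölder then yields
\[
 (|f|*|g|)(x) \le \Bigl(\int_{\R^d}|f(x-y)|^p|g(y)|^q dy\Bigr)^{1/r} \|f\|_{L^p}^{1-p/r}\|g\|_{L^q}^{1-q/r}.
\]
Raising to the $r$-th power, integrating in $x$, and using Fubini on the remaining double integral, which equals $\|f\|_{L^p}^p\|g\|_{L^q}^q$, the estimate collapses to $\|f*g\|_{L^r}^r \le \|f\|_{L^p}^r\|g\|_{L^q}^r$.

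For part (ii), I would rewrite the hypothesis $1/p + 1/q + 1/r = 2$ as $1/p + 1/q = 1 + 1/r'$, where $r'$ denotes the Hölder conjugate of $r$ (so $1/r + 1/r' = 1$). Applying part (i) with target exponent $r'$ gives $\|f*g\|_{L^{r'}} \le \|f\|_{L^p}\|g\|_{L^q}$, and a final application of the ordinary Hölder inequality to the pairing $\int_{\R^d}(f*g)h dx$ produces the stated bound (with constant $C=1$ in this argument).

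The main obstacle is purely bookkeeping: verifying the exponent algebra $1/r + 1/\alpha + 1/\beta = 1$ and handling the degenerate endpoint cases $p = r$ (forcing $q = 1$, so $\alpha = \infty$) or $q = r$ (forcing $p = 1$, so $\beta = \infty$), in which the argument reduces to Minkowski's integral inequality and causes no difficulty. Sharp constants (as in Lieb--Loss) are not needed here, so the proof can proceed with the crude bounds above.
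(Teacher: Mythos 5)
Your proof is correct, but note that the paper does not actually prove this lemma: it is recalled as a known auxiliary result with a citation to Lieb--Loss, so there is no in-paper argument to compare against. What you give is the standard textbook derivation --- part (i) via the three-factor H\"older splitting $|f(x-y)g(y)| = \bigl(|f(x-y)|^p|g(y)|^q\bigr)^{1/r}\,|f(x-y)|^{1-p/r}\,|g(y)|^{1-q/r}$ with exponents $r$, $pr/(r-p)$, $qr/(r-q)$, followed by Fubini, and part (ii) by combining part (i) at the exponent $r'$ with ordinary H\"older --- and the exponent bookkeeping checks out ($1/r + (1/p-1/r) + (1/q-1/r) = 1/p+1/q-1/r = 1$, and the double integral indeed equals $\|f\|_{L^p}^p\|g\|_{L^q}^q$). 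Your argument yields $C=1$, which is stronger than the stated form (Lieb--Loss give a sharp constant below $1$, but that is not needed here). The only loose end beyond the endpoints you already flag is the case $r=\infty$ in part (i), where the splitting degenerates but the claim is immediate from H\"older; this case is exactly what part (ii) uses when $r=1$, so it is worth a sentence.
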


\begin{lemma}[Gronwall-type inequality]\label{lem.sqrt}
Let $a$, $b>0$, $g\in C^0([0,T])$ with $g(t)\ge 0$ for $t\in[0,T]$ and 
$f:[0,T]\to[0,\infty)$ be absolutely continuous such that
$$
  f'(t) \le -g(t)\big(a-b\sqrt{f(t)}\big)\quad\mbox{for }t>0
$$
and $0<f(0) \le (a/b)^2$. Then $f(t) \le (a/b)^2$ for all $t\in[0,T]$.
\end{lemma}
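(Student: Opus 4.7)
The plan is a contradiction argument at the critical level $f=(a/b)^2$, where the right-hand side $-g(a-b\sqrt{f})$ vanishes. The key idea is that just above this barrier one can factor $\sqrt{f}-a/b$ into a form that is linear in the excess $f-(a/b)^2$, which reduces the apparently nonlinear differential inequality to a linear one and then to standard Gronwall.

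Suppose, for contradiction, that there exists $t^*\in(0,T]$ with $f(t^*)>(a/b)^2$. Since $f$ is continuous and $f(0)\le(a/b)^2$, I define
$$
t_0:=\sup\{\,t\in[0,t^*]:f(t)\le(a/b)^2\,\}\in[0,t^*).
$$
Then, by continuity, $f(t_0)=(a/b)^2$ and $f(t)>(a/b)^2$ for all $t\in(t_0,t^*]$. On this interval one has $\sqrt{f(t)}+a/b\ge 2a/b>0$, so
$$
-(a-b\sqrt{f(t)})=b\,\frac{f(t)-(a/b)^2}{\sqrt{f(t)}+a/b}\le\frac{b^2}{2a}\,(f(t)-(a/b)^2).
$$
Writing $h(t):=f(t)-(a/b)^2$, the hypothesis becomes $h'(t)\le (b^2/(2a))\,g(t)\,h(t)$ almost everywhere on $(t_0,t^*]$, a linear differential inequality with non-negative coefficient. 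Since $h$ is absolutely continuous and $h(t_0)=0$, the classical linear Gronwall lemma yields
$$
h(t)\le h(t_0)\exp\!\left(\int_{t_0}^t \frac{b^2}{2a}\,g(s)\,ds\right)=0 \qquad\text{on }[t_0,t^*],
$$
contradicting $h(t^*)>0$. Hence $f(t)\le(a/b)^2$ for all $t\in[0,T]$.

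The principal obstacle, and the reason this is not a one-line consequence of scalar ODE comparison, is that $s\mapsto a-b\sqrt{s}$ is only $1/2$-Hölder continuous at $s=0$, so one cannot invoke the standard Lipschitz comparison theorem globally. The factorization above sidesteps this: on the only region relevant to the argument, namely $\{f>(a/b)^2\}$, $\sqrt{f}$ is bounded below by $a/b>0$ and therefore locally Lipschitz in $f$, which is precisely what allows Gronwall to close the argument.
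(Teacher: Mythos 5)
Your proof is correct, and it takes a genuinely different route from the paper's. The paper argues in three stages: it first shows by contradiction that a solution starting strictly below $(a/b)^2$ can never reach that level, then integrates the separated inequality $f'/(a-b\sqrt{f})\le -g$ explicitly via the antiderivative $F(s)=\int_0^s d\sigma/(a-b\sqrt{\sigma})$, and finally handles the borderline case $f(0)=(a/b)^2$ by approximating the initial datum from below. Your argument instead examines only a hypothetical excursion above the barrier, linearizes there via the factorization $b\sqrt{f}-a=b\,(f-(a/b)^2)/(\sqrt{f}+a/b)\le (b^2/(2a))\,(f-(a/b)^2)$, and closes with the linear Gronwall lemma starting from the exit time $t_0$ where $h(t_0)=0$. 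This is shorter, entirely self-contained, and -- unlike the paper's version -- treats the case $f(0)=(a/b)^2$ directly, avoiding the somewhat delicate final approximation step (which implicitly requires producing approximate functions $f^\delta$ satisfying the same differential inequality, something that only really makes sense in the PDE application). What the paper's explicit integration buys in exchange is a quantitative lower bound on $a-b\sqrt{f(t)}$, which is not needed for the stated conclusion. One small quibble with your closing remark: the obstruction you name is not the real one, since $s\mapsto a-b\sqrt{s}$ is only non-Lipschitz at $s=0$ while the barrier sits at $s=(a/b)^2>0$, where the nonlinearity is smooth; your factorization is in effect a self-contained proof of the standard comparison principle against the constant solution $(a/b)^2$, which could also have been invoked directly.
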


We present a proof of this lemma since we could not find a reference in the
literature.

\begin{proof}
First, let $f(0) < (a/b)^2$. We claim that $f(t)<(a/b)^2$ for $t\in[0,T]$.
Assume that there exists $t_0\in[0,T]$ such that $f(t_0)\ge (a/b)^2$. By continuity,
there exists $t_1\in[0,t_0]$ such that $f(t_1)<(a/b)^2$ and $f'(t_1) > 0$. This
leads to the contradiction $0 < f'(t_1) \le -g(t_1)(a-b\sqrt{f(t_1)}) \le 0$, 
proving the claim.

Since $f(t)<(a/b)^2$ for $t\in[0,T]$, the differential inequality can be
written as
\begin{equation}\label{a.aux}
  \frac{f'(t)}{a-b\sqrt{f(t)}} \le -g(t), \quad t\in[0,T].
\end{equation}
Introduce  
$$
  F(s) = \int_0^s\frac{d\sigma}{a-b\sqrt{\sigma}}
	= \int_0^{\sqrt{s}}\frac{2\tau d\tau}{a-b\tau}
	= -\frac{2}{b^2}\big(b\sqrt{s} + a\log(a-b\sqrt{s}) - a\ln a\big).
$$
Then integrating \eqref{a.aux} over $(0,t)$ leads to
$$
  F(f(t)) - F(f(0)) \le -\int_0^t g(\tau)d\tau,
$$
which, after a computation, is equivalent to 
$$
  a - b\sqrt{f(t)} \ge \big(a-b\sqrt{f(0)}\big)\exp\bigg(\frac{b}{a}\sqrt{f(0)} 
	- \frac{b}{a}\sqrt{f(t)}
	+ \frac{b^2}{2a}\int_0^t g(\tau)d\tau\bigg).
$$
Finally, we choose a sequence of initial data $f_0^\delta < (a/b)^2$
such that $f_0^\delta\to f_0\le (a/b)^2$ as $\delta\to 0$. To each $f_0^\delta$,
we associate a function $f^\delta$ satisfying the differential inequality.
The proof shows that $f^\delta(t)<(a/b)^2$. In the limit $\delta\to 0$, this
reduces to $f(t)\le (a/b)^2$, where $f(t)=\lim_{\delta\to 0}f^\delta(t)$
for $t\in[0,T]$. 
\end{proof}

\end{appendix}


\end{document}